\newtheorem{theorem}{Theorem}
\newtheorem{corollary}[theorem]{Corollary}
\newtheorem{lemma}[theorem]{Lemma}
\theoremstyle{definition}
\newtheorem*{claim}{Claim}
\def \mod#1{{\:({\rm mod}\ #1)}}
\def \A{\mathcal{A}}
\def \B{\mathcal{B}}
\def \D{\mathcal{D}}
\def \leq {\leqslant}
\def \geq {\geqslant}
\def \Gc {\overline{G}}
\def \Cf {C_\textup{F}}
\def \Cn {C_\textup{N}}
\def \an {a_\textup{N}}
\def \Vf {V^*_\textup{F}}
\let\oldproofname=\proofname
\renewcommand{\proofname}{\textup{\textbf{\oldproofname}}}
\title{An Evans-style result for block designs}
\author{Ajani De Vas Gunasekara \qquad  Daniel Horsley\\[2mm]
School of Mathematics, Monash University, Victoria 3800, Australia}
\date{}
\begin{document}

\maketitle

\begin{abstract}
For positive integers $n$ and $k$ with $n \geq k$, an \emph{$(n,k,1)$-design} is a pair $(V, \B)$ where $V$ is a set of $n$ \emph{points} and $\B$ is a collection of $k$-subsets of $V$ called \emph{blocks} such that each pair of points occur together in exactly one block. If we weaken this condition to demand only that each pair of points occur together in at most one block, then the resulting object is a \emph{partial $(n,k,1)$-design}. A \emph{completion} of a partial $(n,k,1)$-design $(V,\A)$ is a (complete) $(n,k,1)$-design $(V,\B)$ such that $\A \subseteq \B$. Here, for all sufficiently large $n$, we determine exactly the minimum number of blocks in an uncompletable partial $(n,k,1)$-design. This result is reminiscent of Evans' now-proved conjecture on completions of partial latin squares. We also prove some related results concerning edge decompositions of almost complete graphs into copies of $K_k$.
\end{abstract}

{
  \small	
\noindent \textbf{\textit{Keywords:}} partial block design, completion, embedding, $K_k$-decomposition, almost complete graph
}

\section{Introduction}

For positive integers $n$, $k$ and $\lambda$ with $n \geq k$, an \emph{$(n,k,\lambda)$-design} is a pair $(V, \B)$ where $V$ is a set of $n$ \emph{points} and $\B$ is a collection of $k$-subsets of $V$ called \emph{blocks} such that each pair of points occur together in exactly $\lambda$ blocks. If we weaken this condition to demand only that each pair of points occur together in at most $\lambda$ blocks, then the resulting object is a \emph{partial $(n,k,\lambda)$-design}. In this paper we are only concerned with $(n,k,1)$-designs and partial $(n,k,1)$-designs. A \emph{completion} of a partial $(n,k,1)$-design $(V,\A)$ is a (complete) $(n,k,1)$-design $(V,\B)$ such that $\A \subseteq \B$. A partial $(n,k,1)$-design is \emph{completable} when it has a completion.  The \textit{leave} of a partial $(n,k,1)$-design $(V,\mathcal{A})$  is the graph $G$ having vertex set $V$ and the edge set $E(G) = \{ xy :  x, y \in V $ such that $ \{x,y\} \nsubseteq A $ for all $ A \in \mathcal{A}  \}$.

Note that an $(n,2,1)$-design exists trivially for each integer $n \geq 2$. It is obvious that if an $(n,k,1)$-design exists then $n(n-1) \equiv 0 \mod{k(k-1)}$ and $n \equiv 1 \mod{(k-1)}$. We call integers $n$ satisfying these restrictions \emph{k-admissible}.  Wilson \cite{Wilson1975} showed that, for each integer $k \geq 3$, there exists an $(n,k,1)$-design for each sufficiently large $k$-admissible value of $n$. Obviously, if a partial $(n,k,1)$-design is completable, then $n$ is $k$-admissible.  Our main result in this paper is to show that, for each sufficiently large $k$-admissible order $n$, all partial $(n,k,1)$-designs with at most $\frac{n-1}{k-1}-k+1$ blocks are completable and that this bound is tight.

\begin{theorem}\label{T:evansBigGenk}
Let $k \geq 3$ be a fixed integer. There is an integer $n_0$ such that for all $k$-admissible integers $n \geq n_0$, any partial $(n,k,1)$-design with at most $\frac{n-1}{k-1}-k+1$ blocks is completable. Furthermore, for all $k$-admissible integers $n \geq (k-1)^2+1$ there is a partial $(n,k,1)$-design with $\frac{n-1}{k-1}-k+2$ blocks that is not completable.
\end{theorem}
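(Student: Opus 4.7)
Set $r := \frac{n-1}{k-1}$ throughout. The tightness statement (the final sentence of the theorem) is proved by an explicit construction. Since $n \geq (k-1)^2+1$, we have $r \geq k-1$, so we may pick any point $x \in V$ and choose $r-k+1$ blocks $A_1,\ldots,A_{r-k+1}$ through $x$ whose non-$x$ parts form pairwise disjoint $(k-1)$-subsets of $V \setminus \{x\}$. The set $U := V \setminus \bigl(\{x\} \cup \bigcup_i A_i\bigr)$ has size $n - 1 - (r-k+1)(k-1) = (k-1)^2 \geq k$ (using $k \geq 3$), so we can take any $k$-subset $B \subseteq U$ as the final block. In any hypothetical completion, $x$ must appear in $k-1$ further blocks whose non-$x$ parts partition $U$ into $(k-1)$-subsets. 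Since $|B \cap U| = k$, pigeonhole forces two elements of $B$ into one partition class, so that pair is covered twice, violating the $(n,k,1)$-design property.

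For the completability statement, let $(V,\A)$ be a partial $(n,k,1)$-design with $|\A| \leq r - k + 1$ and let $G$ be its leave. The divisibility conditions for $G$ to admit a $K_k$-decomposition hold automatically: writing $a_v$ for the number of blocks containing $v$, the degree $\deg_G(v) = (n-1) - (k-1)a_v$ is divisible by $k-1$ because $n \equiv 1 \mod{(k-1)}$, and $|E(G)| = \binom{n}{2} - |\A|\binom{k}{2}$ is divisible by $\binom{k}{2}$ because $n$ is $k$-admissible. Any $K_k$-decomposition of $G$, combined with $\A$, is the desired completion, so the problem reduces to decomposing $G$ into copies of $K_k$.

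The main difficulty is that $\delta(G)$ can be as small as $(k-1)^2$, attained by any vertex that lies in all $|\A|$ blocks of $\A$; this is well below the $(1-o(1))n$ minimum-degree threshold demanded by modern clique-decomposition theorems such as those of Keevash and Glock--K\"uhn--Lo--Osthus. My plan is to treat the ``heavy'' vertices---those contained in many blocks---separately: first enlarge $\A$ by a carefully chosen auxiliary family of blocks that brings each heavy vertex up to full replication number $r$, so that it no longer contributes to the leave. Since $\sum_v a_v = k|\A| = O(n)$, only a controlled number of vertices can be heavy, and the auxiliary blocks through them can be chosen greedily. The technical crux will be arranging these auxiliary blocks so that (i) no pair of points is covered twice across the original and auxiliary blocks, (ii) the divisibility conditions at the remaining vertices and globally are preserved, and (iii) the residual leave on the non-saturated vertices has minimum degree $(1-o(1))n$.

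Once these conditions are met, I would invoke a $K_k$-decomposition theorem for almost-complete graphs to finish the residual decomposition, and combine $\A$, the auxiliary blocks, and the $K_k$-copies of that decomposition to obtain the completion of $\A$. The abstract's promise of ``related results concerning edge decompositions of almost complete graphs into copies of $K_k$'' strongly suggests the authors prove (and use here) a bespoke decomposition result fit for this setting. I expect that this tailored decomposition theorem, together with the preprocessing argument for heavy vertices, is precisely where the bulk of the technical work lies.
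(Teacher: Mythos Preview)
Your outline matches the paper's strategy closely, and your tightness construction is exactly Lemma~\ref{L:tightnessGenk}(a). Two refinements in the paper are worth flagging. First, the paper handles only \emph{one} heavy vertex---the vertex $z$ maximising $|\A_z|$---rather than all of them: after saturating $z$ and deleting it, any remaining edge $uv$ satisfies $|\A'_u|+|\A'_v|\leq \tfrac{2}{3}|\A|$ (because $|\A'_u|,|\A'_v|\leq|\A_z|$ while $|\A'_u|+|\A'_v|\leq|\A|-|\A_z|$), which already gives $|N_{G'}(u,v)|\geq \tfrac{1}{3}n-O(1)$. Second, the ``bespoke decomposition result'' you anticipate (Lemma~\ref{L:mutualNeighbourhoodDecomp}) is phrased as a lower bound on $|N_G(x,y)|$ for every edge $xy$, not on $\delta(G)$; that is precisely what makes the single-vertex reduction sufficient and avoids iterating over several heavy vertices.

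The step you wave through as ``auxiliary blocks chosen greedily'' is where the genuine work sits. Saturating $z$ is equivalent to finding an equitable proper $a$-colouring of $\overline{G}[N_G(z)]$ (with $a=r-|\A_z|$), i.e.\ a $K_{k-1}$-factor in $G[N_G(z)]$. A naive greedy choice of blocks can get stuck, and Hajnal--Szemer\'edi does not apply because $\overline{G}[N_G(z)]$ need not have small maximum degree. The paper's Lemma~\ref{L:colouring} proves this colouring exists by greedily colouring along a degeneracy ordering and, when blocked, recolouring a single already-coloured vertex; the analysis uses the partial-design structure ($|A\cap A'|\leq 1$) in an essential way and is the main technical contribution behind Theorem~\ref{T:evansBigGenk}. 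So your plan is correct, but ``greedily'' understates what is required at exactly the point where the proof is hardest.
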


The existence of the uncompletable partial designs claimed in Theorem~\ref{T:evansBigGenk} is easily proved (see Lemma~\ref{L:tightnessGenk}(a)). For sufficiently large $n$, Theorem~\ref{T:evansBigGenk} establishes a generalisation of a conjecture of the second author in \cite{Hors2014} that any partial $(n,3,1)$-design having at most $\frac{n-5}{2}$ blocks is completable. Theorem~\ref{T:evansBigGenk} also nicely complements recent results of Nenadov, Sudakov and Wagner \cite{NenadovSudakovWagner2019}. They show that there exist $\epsilon,n_0>0$ such that we can add blocks to any partial $(n,k,1)$-design $(V,\A)$ with $n > n_0$ and $|\A| \leq \epsilon n^2$ to obtain another partial $(n,k,1)$-design whose leave has at most $21k^3\sqrt{|\A|}\,n$ edges. They also show that we can add points and blocks to such a design to obtain a (complete) $(n',k,1)$-design such that $n' \leq n+7k^2\sqrt{|\mathcal{A}|}$.

Theorem~\ref{T:evansBigGenk} is also reminiscent of a well known conjecture of Evans. A partial latin square of order $n$ is an $n \times n$ array in which each cell is either empty or contains an element of $\{1,\ldots,n\}$, and each element of $\{1,\ldots,n\}$ occurs at most once in each row and column. A latin square is a partial latin square with no empty cells. Evans \cite{Evans1960} conjectured that every partial latin square of order $n$ with at most $n-1$ filled cells can be completed to a latin square. This bound is tight because there is a partial latin square of order $n$ with $n$ filled cells that is not completable for each $n \geq 2$. Smetaniuk \cite{Smetaniuk1981} and Anderson and Hilton \cite{AndersonHilton1983} independently proved  Evans' conjecture for all $n$.

There are few completion results available for partial $(n,k,\lambda)$-designs. Colbourn \cite{colbourn paper} has shown that it is $\mathsf{NP}$-complete to decide whether a given partial $(n,3,1)$-design can be completed. In \cite{ColbournCRosa1983} it is observed that partial $(n,3,1)$-designs in which some fixed point is in every block and partial $(n,3,1)$-designs consisting of an odd number of pairwise disjoint blocks are easily seen to be completable. It is then shown that a partial $(n,3,1)$-design is completable if it has two points $x$ and $y$ such that one block contains both $x$ and $y$ and each other block contains either $x$ or $y$.

A \textit{$K_k$-decomposition} of a graph $G$ is a set of copies of $K_k$ in $G$ whose edge sets partition $E(G)$. An $(n,k,1)$-design is equivalent to a $K_k$-decomposition of $K_n$ and a partial $(n,k,1)$-design is equivalent to a $K_k$-decomposition of some subgraph of $K_n$.  Finding a completion of a partial $(n,k,1)$-design is equivalent to finding a $K_k$-decomposition of its leave, and throughout the remainder of the paper we will often view completions in this way. If a graph $G$ has a $K_k$-decomposition, then we must have $|E(G)| \equiv 0 \mod{\binom{k}{2}}$ and $\deg_G(x) \equiv 0 \mod{k-1}$ for each $x \in V(G)$. We call graphs that obey these necessary conditions \emph{$K_k$-divisible}. So Theorem~\ref{T:evansBigGenk} can be rephrased as saying that, for sufficiently large $n$, any graph $G$ on $n$ vertices that is the leave of a partial $(n,k,1)$-design and whose complement has at most $(\frac{n-1}{k-1}-k+1)\binom{k}{2}$ edges, has a $K_k$-decomposition. It is natural to ask whether we can relax the condition that the graph is the leave of a partial design. We prove two subsidiary results which show that this can only be done at the expense of increasing the bound on the number of edges in $G$. Theorem~\ref{T:evansForAdmissibleOrder} considers the case where $G$ need not be a leave but must still have order congruent to $1$ modulo $k-1$, and Theorem~\ref{T:evansForAnyGraph} considers the case where $G$ can be any $K_k$-divisible graph.

\begin{theorem}\label{T:evansForAdmissibleOrder}
Let $k \geq 3$ be a fixed integer. There is an integer $n_0$ such that for all  integers $n \geq n_0$ with $n \equiv 1 \mod{k-1}$, any $K_k$-divisible graph $G$ of order $n$ has a $K_k$-decomposition if

\[|E(G)| > \tbinom{n}{2} - \left(\tfrac{n-1}{k-1}-\ell\right)\tbinom{k}{2} \quad\text{ where }\quad \ell=\tfrac{1}{4}(k^2-k-2).\]
Furthermore, if $k=3$ or $k \equiv 2 \pmod{4}$, then for all $k$-admissible $n \geq \frac{1}{2}k(k-1)^2+1$ there is a $K_k$-divisible graph $G$ of order $n$ such that  $|E(G)| = \tbinom{n}{2} - (\tfrac{n-1}{k-1}-\ell)\tbinom{k}{2}$ and $G$ is not $K_k$-decomposable.
\end{theorem}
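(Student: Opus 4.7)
The proof has two parts: sufficiency of the edge-count bound for $K_k$-decomposability and tightness via an explicit construction.

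\emph{Sufficiency.} My plan is to reduce to Theorem~\ref{T:evansBigGenk}. Given a $K_k$-divisible graph $G$ of order $n$ satisfying the edge bound, I would seek an edge-disjoint family $\mathcal{B}=\{B_1,\ldots,B_q\}$ of copies of $K_k$ in $K_n$ with $q\leq\tfrac{n-1}{k-1}-(k-1)$ such that $\bigcup_i E(B_i)\supseteq E(\overline{G})$, the ``extra'' edges $F=\bigcup_i E(B_i)\setminus E(\overline{G})$ lie in $E(G)$, and $F$ itself partitions into copies of $K_k$. Writing $G'=G\setminus F$, the graph $G'$ is the leave of the partial $(n,k,1)$-design $(V,\mathcal{B})$, and so by Theorem~\ref{T:evansBigGenk} has a $K_k$-decomposition $\mathcal{D}'$; combining $\mathcal{D}'$ with the $K_k$-decomposition of $F$ would yield the desired $K_k$-decomposition of $G$. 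The slack between the bounds in the two theorems equals $(\ell-(k-1))\binom{k}{2}$, which is nonnegative precisely when $k\geq 4$, so for those values the strategy has some budget to spare. For $k=3$ the hypothesis of Theorem~\ref{T:evansForAdmissibleOrder} is actually weaker (allowing more missing edges) than that of Theorem~\ref{T:evansBigGenk}, so a direct reduction is infeasible and one must argue directly --- for instance, by iteratively pairing up edges of $\overline{G}$ into triangles via fresh vertices of $G$ and absorbing these triangles into the decomposition on the fly.

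\emph{Main obstacle and tightness.} Building the family $\mathcal{B}$ is the core difficulty: the blocks must collectively cover $\overline{G}$ efficiently while ensuring the extras $F$ are themselves $K_k$-decomposable, which generally requires treating the denser portions of $\overline{G}$ first so that each block covers several edges of $\overline{G}$ at once. The value $\ell=\tfrac{1}{4}(k^2-k-2)$ should emerge as the precise threshold at which the accounting closes. For tightness, when $k=3$ any uncompletable partial $(n,3,1)$-design with $\tfrac{n-1}{2}-1$ blocks from Lemma~\ref{L:tightnessGenk}(a) has leave $G$ with $|E(\overline{G})|=(\tfrac{n-1}{2}-\ell)\binom{3}{2}$ that is not $K_3$-decomposable, furnishing the required counterexample. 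For $k\equiv 2\pmod 4$, I would construct $G$ by starting from a near-complete $(n,k,1)$-design, removing roughly $\tfrac{n-1}{k-1}-\ell$ blocks through a common vertex $v$, and modifying some edges off $v$ so that the link of $v$ in $G$ admits no $K_{k-1}$-decomposition; the hypothesis $k\equiv 2\pmod 4$ is presumably what makes $\ell$ an integer and the divisibility/parity bookkeeping of the link obstruction close up. Verifying this non-decomposability cleanly would be the main work.
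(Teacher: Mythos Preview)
Your reduction to Theorem~\ref{T:evansBigGenk} does not go through, for several concrete reasons.

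First, Theorem~\ref{T:evansBigGenk} requires $n$ to be $k$-admissible, whereas Theorem~\ref{T:evansForAdmissibleOrder} only assumes $n\equiv 1\pmod{k-1}$. If $n$ is not $k$-admissible then $|E(\overline{G})|\not\equiv 0\pmod{\binom{k}{2}}$, so the ``extra'' edge set $F=\bigcup_iE(B_i)\setminus E(\overline{G})$ you propose cannot be $K_k$-decomposable, and your scheme collapses for all such $n$. Second, your slack computation is off: for $k=4$ one has $\ell=\tfrac{10}{4}=2.5<3=k-1$, so the slack $(\ell-(k-1))\binom{k}{2}$ is negative there too, not just for $k=3$. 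Third, and most seriously, even when the edge counts permit it, you give no mechanism for constructing $\mathcal{B}$. Covering an arbitrary sparse $\overline{G}$ by edge-disjoint $K_k$'s while simultaneously forcing the surplus $F$ to be $K_k$-decomposable is very restrictive: a block covering a single edge of $\overline{G}$ contributes a copy of $K_k$ minus an edge to $F$, which is never $K_k$-decomposable. Your proposal contains no idea for handling this, and ``treating the denser portions first'' is not a method.

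The paper does not reduce to Theorem~\ref{T:evansBigGenk} at all; it argues in parallel to it. The route is: pick a vertex $z$ of minimum degree in $G$, set $U=N_G(z)$ with $|U|=a(k-1)$, and prove (this is the content of Lemma~\ref{L:colouring2}) that the edge bound forces $\sum_{x\in U}\lceil\tfrac{1}{k-1}\deg_{\overline{G}[U]}(x)\rceil<k(a-\ell)$, which guarantees a proper $a$-colouring of $\overline{G}[U]$ with all classes of size $k-1$. The colour classes give a $K_{k-1}$-factor of $G[U]$, hence copies of $K_k$ through $z$ covering all edges at $z$; after deleting these and $z$, the remaining graph has every edge in $\Omega(n)$ triangles and one finishes with Lemma~\ref{L:mutualNeighbourhoodDecomp}. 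The constant $\ell=\tfrac14(k^2-k-2)$ arises precisely as the threshold in the colouring lemma. For tightness when $k\equiv 2\pmod 4$, the paper's construction is explicit: $\overline{G}$ is a union of $K_k$'s sharing a single vertex $z$ together with one disjoint copy of $K_{\frac{k}{2}(k-1)+1}$; the obstruction is that $z$ would need too many blocks to cover the edges to that large clique. Your sketch (``remove blocks through $v$ and modify edges off $v$'') does not yet describe a graph, let alone verify non-decomposability.
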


\begin{theorem}\label{T:evansForAnyGraph}
Let $k \geq 3$ be a fixed integer. There is an integer $n_0$ such that for all integers $n \geq n_0$, any $K_k$-divisible graph $G$ of order $n$ has a $K_k$-decomposition if
\[|E(G)| >
\begin{cases}
\tbinom{n}{2}-n+\frac{1}{2}(k+1) & \text{if $k \geq 4$}\\
\tbinom{n}{2}-n & \text{if $k=3$.}
\end{cases}
\]
Furthermore, if $k$ divides $s^2-s-1$ for some positive integer $s$, then for $n=s(k-1)+2$ there is a $K_k$-divisible graph $G$ of order $n$ such that  $|E(G)| = \binom{n}{2}-n+\frac{1}{2}(k+1)$ and $G$ is not $K_k$-decomposable. Finally, for each integer $n \geq 12$ with $n \equiv 0 \mod{6}$, there is a $K_3$-divisible graph $G$ of order $n$ such that $|E(G)| = \binom{n}{2}-n$ and $G$ is not $K_3$-decomposable.
\end{theorem}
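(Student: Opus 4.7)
My plan is to reduce the positive direction of Theorem~\ref{T:evansForAnyGraph} to Theorem~\ref{T:evansBigGenk} where possible, and to establish the tightness claims by explicit construction. Set $\Gc := K_n \setminus G$. Since $G$ is $K_k$-divisible, every vertex satisfies $\deg_{\Gc}(v) \equiv n-1 \pmod{k-1}$; summing this against the edge bound $|E(\Gc)| < n - \tfrac{1}{2}(k+1)$ (or $< n$ when $k=3$) forces the common residue $r := (n-1)\bmod(k-1)$ to be at most $1$, or else the hypothesis is vacuous. I split the argument into the two cases $n \equiv 1$ and $n \equiv 2 \pmod{k-1}$.

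In the case $n \equiv 1 \pmod{k-1}$, the degrees of $\Gc$ are already divisible by $k-1$. Here I would build a partial $(n,k,1)$-design $\A^*$ on $V(G)$ with block-edge set $X = E(\Gc) \cup E(\mathcal{P})$, where $\mathcal{P}$ is a short packing of $K_k$-copies in $G$ of constant size (depending only on $k$), chosen greedily using the density of $G$ so as to make $|E(X)|$ divisible by $\binom{k}{2}$ and to patch $\Gc$ into a $K_k$-decomposable configuration. Then $|\A^*|\leq \frac{n-1}{k-1}-k+1$, Theorem~\ref{T:evansBigGenk} extends $\A^*$ to a full $(n,k,1)$-design whose new blocks lie entirely in $E(G)\setminus E(\mathcal{P})$, and uniting them with $\mathcal{P}$ yields a $K_k$-decomposition of $G$.

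The case $n \equiv 2 \pmod{k-1}$ is the main obstacle. Every vertex now has $\deg_{\Gc}(v) \equiv 1 \pmod{k-1}$, so $\Gc$ spans $V$ and $|E(\Gc)| \geq n/2$; moreover a simple degree check rules out the direct analogue of the above approach, since for any $\A^*$ with block-edge set $X = E(\Gc) \cup Y$ and $Y \subseteq E(G)$, the $K_k$-divisibility of $X$ forces $\deg_Y(v) \equiv k-2 \pmod{k-1}$, which prevents $Y$ from being $K_k$-decomposable by cliques in $G$. My plan is to work around this by choosing $\A^*$ with some carefully placed \emph{mixed} blocks (each containing one or more edges of $\Gc$) paired with cliques of a reserved $K_k$-packing $\mathcal{P}$ in $G$, so that after local edge-swaps each mixed block contributes only $G$-edges to the final accounting and those $G$-edges reassemble into cliques of $\mathcal{P}$. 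The principal difficulty will be executing these swaps consistently across all $\Gc$-edges; I expect to use the fact that $\Gc$ is structurally close to a matching in Case~2 (with fewer than $n/(k-1)$ vertices of higher degree) to reserve a buffer of $K_k$-cliques in $G$ on vertices avoiding the heavy part of $\Gc$, and then route each $\Gc$-edge through one reserved clique.

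For tightness, the constructions are explicit. For $k$ odd with $k \mid s^2-s-1$ and $n = s(k-1)+2$, set $\alpha := \tfrac{1}{2}(s(k-1)-k+3)$ and take $\Gc$ to have two non-adjacent vertices $a, b$ of $\Gc$-degree $\alpha$, each joined to its own private set of $\alpha$ pendant vertices, plus a perfect matching on the remaining $n-2-2\alpha = k-3$ vertices. A direct computation yields $|E(\Gc)| = n - \tfrac{1}{2}(k+1)$ and shows that $|E(G)| \equiv 0 \pmod{\binom{k}{2}}$ is equivalent to $k \mid s^2-s-1$. Non-decomposability follows because $|N_G(a) \cap N_G(b)| = k-3$, whereas any $K_k$-block through the edge $ab \in E(G)$ would require $k-2$ common neighbours. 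For $k = 3$ and $n \equiv 0 \pmod 6$, I would construct $\Gc$ as a suitable spanning graph with $|E(\Gc)| = n$ and all degrees odd (for instance, a disjoint union of small obstruction configurations) and rule out any $K_3$-decomposition of $G$ by a local parity or counting argument at a distinguished vertex.
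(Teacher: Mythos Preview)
Your proposal has genuine gaps in both the positive direction and the tightness constructions.

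\textbf{Positive direction, case $n\equiv 1\pmod{k-1}$.} You claim a constant-size packing $\mathcal{P}$ in $G$ can be chosen so that $E(\Gc)\cup E(\mathcal{P})$ is the edge set of a partial $(n,k,1)$-design. This fails already for $k=3$: take $\Gc$ to be a single cycle $C_{2m}$ on $2m<n$ vertices (with $n$ odd, $m\equiv 0\pmod 3$), so $G$ is $K_3$-divisible and $|E(\Gc)|<n$. All but $O(1)$ cycle vertices are untouched by a constant-size $\mathcal{P}$ and hence have degree~$2$ in $X=\Gc\cup E(\mathcal{P})$; the unique triangle of any $K_3$-decomposition of $X$ through such a vertex $v_i$ must be $\{v_{i-1},v_i,v_{i+1}\}$, forcing the chord $v_{i-1}v_{i+1}\in E(\mathcal{P})$. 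This gives $|E(\mathcal{P})|\geq m-O(1)$, contradicting constant size. More generally, $\Gc$ need not be anywhere near $K_k$-decomposable, and ``patching'' it can require $\Omega(n)$ extra edges, at which point the block count for $\A^*$ overruns the Theorem~\ref{T:evansBigGenk} bound. The paper avoids this entirely: when $n\equiv 1\pmod{k-1}$ it simply checks that every edge of $G$ lies in at least $\tfrac{1}{3}n-O(1)$ triangles and applies Lemma~\ref{L:mutualNeighbourhoodDecomp} directly.

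\textbf{Positive direction, case $n\equiv 2\pmod{k-1}$.} Your plan here is only a sketch, and the ``mixed block / edge-swap'' mechanism is not made precise. The paper's key idea in this case is different and concrete: pick a minimum-degree vertex $z$, prove (Lemma~\ref{L:colouring3}) that $\overline{G}[N_G(z)]$ admits an equitable proper colouring with $\deg_G(z)/(k-1)$ colours, hence $G[N_G(z)]$ has a $K_{k-1}$-factor. This absorbs $z$ into $K_k$'s; deleting $z$ leaves a graph on $n-1\equiv 1\pmod{k-1}$ vertices to which Lemma~\ref{L:mutualNeighbourhoodDecomp} applies. Your proposal does not contain this step.

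\textbf{Tightness for $k\mid s^2-s-1$.} Your two-centre construction has $\deg_{\Gc}(a)=\deg_{\Gc}(b)=\alpha=\tfrac{1}{2}(s(k-1)-k+3)=\tfrac{(s-1)(k-1)}{2}+1$, and $K_k$-divisibility requires $\alpha\equiv 1\pmod{k-1}$, i.e.\ $s$ odd. But $s$ need not be odd: for $k=11$ one may take $s=4$ (since $4^2-4-1=11$), giving $\alpha=16\equiv 6\pmod{10}$, so your $G$ is not $K_k$-divisible. The paper instead takes $\Gc$ to be a star with $n-k$ edges centred at $z$ together with a perfect matching on the remaining $k-1$ vertices (Lemma~\ref{L:tightnessGenk}(c)); then $N_G(z)$ has size $k-1$ but $\overline{G}[N_G(z)]$ is a perfect matching, so no $K_k$ in $G$ contains $z$. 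For $k=3$, $n\equiv 0\pmod 6$, the paper's explicit example (Lemma~\ref{L:k=3tightness}(a)) is a star with $n-7$ edges, a disjoint $K_4$, and a disjoint $K_2$.
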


The case division in Theorem~\ref{T:evansForAnyGraph} is due to the fact that we go to a little extra effort to obtain a tight bound for the special case $k=3$. Note that there are infinitely many values of $k$, all of them odd, such that $k$ divides $s^2-s-1$ for some positive integer $s$. From Theorems~\ref{T:evansForAdmissibleOrder} and~\ref{T:evansForAnyGraph} it is not too difficult to determine the maximum number of edges in a graph of order $n$ that is $K_3$-divisible but not $K_3$-decomposable for all sufficiently large $n$.

\begin{corollary}\label{C:k=3}
There is an integer $n_0$ such that for all integers $n \geq n_0$, any $K_3$-divisible graph $G$ of order $n$ has a $K_3$-decomposition if $|E(G)| > \tbinom{n}{2} - e(n)$, where
\[ e(n)=
\begin{cases}
\frac{1}{2}(3n-9) & \text{if $n \equiv 1, 3 \mod{6}$}\\
\frac{1}{2}(3n-7) & \text{if $n \equiv 5 \mod{6}$}\\
n+2 & \text{if $n \equiv 2,4 \mod{6}$}\\
n & \text{if $n \equiv 0 \mod{6}$.}
\end{cases}\]
Furthermore, for each $n \geq 7$ there is a $K_3$-divisible graph $G$ of order $n$ such that  $|E(G)| = \tbinom{n}{2} - e(n)$ and $G$ is not $K_3$-decomposable.
\end{corollary}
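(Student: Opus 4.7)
The plan is to split into residue classes modulo $6$, using Theorems~\ref{T:evansForAdmissibleOrder} and \ref{T:evansForAnyGraph} for the sufficient condition and providing tightness constructions in each class. For the sufficient condition, observe that $K_3$-divisibility forces $|E(G)| \equiv 0 \pmod{3}$, so $t := \binom{n}{2} - |E(G)|$ is restricted mod $3$. Applying Theorem~\ref{T:evansForAdmissibleOrder} (when $n$ is odd) or Theorem~\ref{T:evansForAnyGraph} (when $n$ is even), the hypothesis $|E(G)| > \binom{n}{2} - e(n)$, combined with the mod-$3$ congruence on $t$, implies the hypothesis of the relevant theorem. The value of $e(n)$ can legitimately exceed the raw theorem threshold precisely when the divisibility constraint forces any $K_3$-divisible $t < e(n)$ to lie strictly below that threshold.

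For tightness I exhibit in each class a $K_3$-divisible non-decomposable graph $G$ of order $n$ with $|E(G)| = \binom{n}{2} - e(n)$. When $n \equiv 1, 3 \pmod{6}$ (for $n \geq 7$) or $n \equiv 0 \pmod{6}$ (for $n \geq 12$), the tightness statements in Theorems~\ref{T:evansForAdmissibleOrder} and \ref{T:evansForAnyGraph} apply directly. For $n \equiv 2, 4 \pmod{6}$ with $n \geq 8$, I fix three vertices $v, a, b$, let $C = V \setminus \{v, a, b\}$, and take $\bar G$ to consist of the $n-3$ edges from $v$ to $C$, the edge $ab$, and any $4$-cycle on four vertices of $C$ (which exists since $|C| \geq 5$). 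Then $|E(\bar G)| = n + 2 = e(n)$, all vertices of $\bar G$ have odd degree (so $G$ has all even degrees), and $|E(G)| \equiv 0 \pmod 3$ in both residue classes, so $G$ is $K_3$-divisible. Since $v$ has degree $2$ in $G$ with neighbors $a, b$ satisfying $ab \notin E(G)$, no triangle of $G$ contains $v$, and $G$ is not $K_3$-decomposable.

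For $n \equiv 5 \pmod{6}$ with $n \geq 11$, I extend the Theorem~\ref{T:evansForAdmissibleOrder} tightness example $G'$ of order $n' = n - 2 \equiv 3 \pmod{6}$: it contains a distinguished vertex $v$ of small even degree whose $G'$-neighborhood induces a graph with no perfect matching. Choose any $p \in V(G') \setminus \{v\}$, adjoin new vertices $u, w$, and let $G$ consist of all edges of $G'$, the edge $uw$, and all edges from $u$ or $w$ to vertices of $V(G') \setminus \{v, p\}$. The four added non-edges $uv, wv, up, wp$ give $|E(\bar G)| = \frac{3(n-2) - 9}{2} + 4 = \frac{3n-7}{2} = e(n)$. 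A parity check (the new vertices have even degree $n - 3$, the vertex $p$ is unaffected, and every other old vertex gains exactly two neighbors) shows $G$ is $K_3$-divisible; since $N_G(v) = N_{G'}(v)$ with the same induced subgraph, the non-matching obstruction from $G'$ persists in $G$. The principal delicate step in the plan is the mod-$3$ congruence analysis pinning down $e(n)$ exactly in each class; the four constructions themselves reduce to short local verifications.
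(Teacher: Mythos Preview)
Your proposal is correct and follows essentially the same route as the paper: the sufficiency argument reduces to Theorems~\ref{T:evansForAdmissibleOrder} and~\ref{T:evansForAnyGraph} via the mod-$3$ constraint on $|E(\overline{G})|$, exactly as in the paper's proof. The only real difference lies in the tightness examples for $n\equiv 2,4,5\pmod{6}$. The paper simply invokes the ready-made constructions of Lemma~\ref{L:k=3tightness}(b) and (c), whereas you supply alternatives: a star plus isolated edge plus $4$-cycle in $\overline{G}$ for $n\equiv 2,4$, and an extension of the Lemma~\ref{L:tightnessGenk}(b) example by two new vertices for $n\equiv 5$. Both sets of examples are valid and of comparable simplicity; the paper's are marginally more self-contained, since your $n\equiv 5$ construction depends on knowing the internal structure (specifically, the low-degree vertex whose neighbourhood has no perfect matching) of the Theorem~\ref{T:evansForAdmissibleOrder} tightness example, rather than just its existence.
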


Very recently, Gruslys and Letzter \cite{GruLet} have proved that any graph of order $n \geq 7$ with strictly more than $\binom{n}{2}-(n-3)$ edges has a \emph{fractional} $K_3$-decomposition. This makes an interesting comparison with Theorem~\ref{T:evansForAnyGraph} and Corollary~\ref{C:k=3}. Considering complements, Theorems~\ref{T:evansForAdmissibleOrder} and~\ref{T:evansForAnyGraph} can be thought of as concerning which graphs are or are not the leaves of partial $(n,k,1)$-designs. This question has received some attention: see \cite[Chapter 9]{ColRosBook}, \cite[\S40.4]{StiWeiYin} and the references therein, for example. Perhaps closest to our concerns here, the possible sizes of \emph{triangle-free} graphs whose complements are $K_3$-divisible but not $K_3$-decomposable are considered in \cite{StiWal}. Our results here improve the lower bounds in that paper.

\section{Preliminaries}\label{S:prelim}

For a family $\A$ of subsets of a set $V$ and an element $x \in V$, we let $\A_x=\{A \in \A: x\in A\}$. For a set $A$ of vertices we use $K_A$ to denote the complete graph with vertex set $A$. For a graph $G$ and a subset $S$ of $V(G)$, we denote by $G[S]$ the subgraph of $G$ induced by $S$. We also denote the minimum and maximum degree of $G$ by $\delta(G)$ and $\Delta(G)$ and the complement of $G$ by $\overline{G}$. For graphs $G$ and $H$ we denote by $G \cup H$ the graph with vertex set $V(G) \cup V(H)$ and edge set $E(G) \cup E(H)$ and denote by $G-H$ the graph with vertex set $V(G)$ and edge set $E(G) \setminus E(H)$. For a positive integer $r$, a \emph{$K_r$-factor} of a graph $G$ is a set of copies of $K_r$ in $G$ whose vertex sets partition $V(G)$. For vertices $x$ and $y$ of a graph $G$, we use $N_G(x,y)$ to denote the mutual neighbourhood $N_G(x) \cap N_G(y)$ of $x$ and $y$. In Lemma~\ref{L:tightnessGenk}(a), (b) and (c) below, we  establish the tightness claims in Theorems~\ref{T:evansBigGenk} and \ref{T:evansForAdmissibleOrder} and in the $k\geq4$ case of Theorem~\ref{T:evansForAnyGraph} respectively.

\begin{lemma}\label{L:tightnessGenk} Let $k \geq 3$ be an integer.
\begin{itemize}
    \item[\textup{(a)}]
For all $k$-admissible integers $n \geq (k-1)^2+1$ there is a partial $(n,k,1)$-design with $\frac{n-1}{k-1}-k+2$ blocks that is not completable.
    \item[\textup{(b)}]
If $k=3$ or $k \equiv 2 \pmod{4}$ then, for all $k$-admissible integers $n \geq \frac{1}{2}k(k-1)^2+1$, there is a $K_k$-divisible graph $G$ of order $n$ such that
\[|E(\overline{G})| = \left(\tfrac{n-1}{k-1}-\tfrac{1}{4}(k^2-k-2)\right)\tbinom{k}{2}\]
and $G$ is not $K_k$-decomposable.
    \item[\textup{(c)}]
If $k$ divides $s^2-s-1$ for some positive integer $s$ then, for $n=s(k-1)+2$, there is a $K_k$-divisible graph $G$ of order $n$ with $|E(\overline{G})|=n-\frac{1}{2}(k+1)$ that is not $K_k$-decomposable.
\end{itemize}
\end{lemma}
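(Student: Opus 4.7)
Parts (a) and (c) admit explicit constructions with pigeonhole-type obstructions; part (b) splits into an easy subcase $k = 3$ (which inherits from (a)) and a harder subcase $k \equiv 2 \pmod{4}$, which is the principal challenge.

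For part (a), I fix a point $x$ and set $q = \tfrac{n-1}{k-1}$. I take $q - k + 1$ blocks through $x$ pairwise meeting only at $x$; these cover $x$ together with $n - 1 - (k-1)^2$ other points, leaving a set $T$ of $(k-1)^2$ uncovered non-$x$ points. Since $(k-1)^2 \geq k$ for $k \geq 2$, I then choose any $k$-subset $B \subseteq T$ as the final block, producing $q - k + 2$ blocks in all. In any completion, $x$ must lie in exactly $q$ blocks, so a further $k - 1$ blocks through $x$ must partition $T$ into $(k-1)$-subsets. Since $|B| = k$ exceeds the number $k - 1$ of parts, by pigeonhole some part shares at least two points with $B$, creating a pair repeated with $B$ --- the desired obstruction.

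For part (b) with $k = 3$ we have $\ell = 1 = k - 2$, so the edge count $(q - \ell)\binom{k}{2}$ exactly matches the leave from part (a); that leave is $K_3$-divisible (being a leave) and, by the argument above, is not $K_3$-decomposable. For $k \equiv 2 \pmod{4}$, we have $\ell > k - 2$, so strictly fewer edges are allowed in $\overline{G}$ and the one-extra-block trick of (a) no longer fits the edge budget. Here I would try taking $\overline{G}$ to be a pencil of $q - \ell$ copies of $K_k$ through a pivot $x$, possibly with a small adjustment on the non-pencil part, and argue that in any purported decomposition the $\ell$ cliques through $x$ together with the pencil complete a $K_{k-1}$-factor of $K_{n-1}$ on $V \setminus \{x\}$ for which $K_{n-1}$ minus this factor admits no $K_k$-decomposition, exploiting the specific form $\ell = \tfrac{(k-2)(k+1)}{4}$. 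The main obstacle is precisely this: the argument must rule out every choice of the additional $\ell$ cliques through $x$, not merely exhibit one bad pair.

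For part (c), let $u$ be a distinguished vertex and define $\overline{G}$ to be the disjoint union of a star $K_{1, n-k}$ centered at $u$ with a perfect matching $M$ on the $k - 1$ vertices not in the star (noting that the hypothesis forces $k$ odd, so $\tfrac{k-1}{2}$ is an integer). Then $|E(\overline{G})| = (n - k) + \tfrac{k-1}{2} = n - \tfrac{k+1}{2}$; the $\overline{G}$-degree of $u$ is $n - k = (s-1)(k-1) + 1$ and every other vertex has $\overline{G}$-degree $1$, so all $\overline{G}$-degrees are $\equiv 1 \pmod{k-1}$ and hence all $G$-degrees are divisible by $k - 1$. Substituting $n = s(k-1) + 2$ into $|E(G)| = \binom{n}{2} - n + \tfrac{k+1}{2}$ and reducing modulo $\binom{k}{2}$ shows edge-divisibility reduces to $k \mid s^2 - s - 1$, which is the given hypothesis; so $G$ is $K_k$-divisible. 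Finally, $u$ has $G$-degree $k - 1$, so in any $K_k$-decomposition $u$ lies in exactly one clique, which must be $\{u\} \cup N_G(u)$. But $N_G(u)$ consists of the $k - 1$ matching vertices, which contain the edges of $M$ in $\overline{G}$ and so fail to induce a $K_{k-1}$ in $G$; no such clique exists, so $G$ has no $K_k$-decomposition.
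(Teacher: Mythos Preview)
Your arguments for (a), (c), and the $k=3$ subcase of (b) are correct and coincide with the paper's (the paper phrases the contradiction in (a) as ``the $k$ points of the extra block each need a distinct new block through $z$, but only $k-1$ are available'', which is your pigeonhole observation).

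The gap is in (b) for $k \equiv 2 \pmod{4}$: you only sketch a direction and explicitly flag the obstruction argument as unresolved. In fact the construction and the non-decomposability argument are both simpler than you anticipate, and of the same flavour as (a). Set $t=\tfrac{n-1}{k-1}-\tfrac{k}{2}(k-1)$ and take $\overline{G}$ to be the edge-disjoint union of $t$ copies of $K_k$ pairwise meeting in a pivot $z$, together with a single clique $K_{A_0}$ on a set $A_0$ of size $\tfrac{k}{2}(k-1)+1$ disjoint from the pencil. A short computation gives $|E(\overline{G})|=(\tfrac{n-1}{k-1}-\ell)\binom{k}{2}$, and every vertex has $\overline{G}$-degree divisible by $k-1$ (here one uses $k$ even so that $|A_0|-1=\tfrac{k}{2}(k-1)\equiv 0 \pmod{k-1}$). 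For non-decomposability, $\deg_G(z)=n-1-t(k-1)=\tfrac{k}{2}(k-1)^2$, so $z$ lies in exactly $\tfrac{k}{2}(k-1)$ copies of $K_k$ in any decomposition; but the $|A_0|=\tfrac{k}{2}(k-1)+1$ edges from $z$ to $A_0$ must go into pairwise distinct copies of $K_k$ (since $A_0$ is independent in $G$), a contradiction. There is no need to ``rule out every choice of the additional $\ell$ cliques through $x$'': the count of cliques through $z$ is forced, and the single large independent set $A_0$ in $N_G(z)$ immediately overloads it.
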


\begin{proof}
We first prove (a). Let $(V, \A)$ be a partial $(n,k,1)$-design with $|\A|= \frac{n-1}{k-1}-k+2$ such that $\frac{n-1}{k-1}-k+1$ blocks each contain some fixed point $z \in  V$ and the remaining block, say $A_0$, is disjoint from every other block in $\A$. So $|\A_{z}| = \frac{n-1}{k-1}-k+1$. Suppose for a contradiction that $(V,\B)$ is a completion of $(V, \A)$. In $(V,\B)$ each point lies in exactly $\frac{n-1}{k-1}$ blocks. Thus $|\B_z \setminus \A_z|=k-1$. But $|\B_z \setminus \A_z| \geq k$ because each pair in $\{\{x,z\}:x \in A_0\}$ must occur in a different block. This is a contradiction.

We now prove (b). If $k=3$ then the leave of the partial $(n,k,1)$-design defined in (a) has the required properties, so we may assume that $k \equiv 2 \mod{4}$. Let $V$ be a set of $n$ vertices and let $z \in V$. Let $t=\frac{n-1}{k-1}-\frac{k}{2}(k-1)$ and let $A_1,\ldots,A_t$ be $k$-subsets of $V$ such that $A_i \cap A_j=\{z\}$ for all distinct $i,j \in \{1,\ldots,t\}$. Let $A_0$ be a $(\frac{k}{2}(k-1)+1)$-subset of $V$ such that $A_0$ is disjoint from $A_i$ for all $i \in \{1,\ldots,t\}$. Take $G$ to be the graph $K_V-\bigcup_{i=0}^tK_{A_i}$ and note
\[|E(\overline{G})| = t\tbinom{k}{2}+\tbinom{k(k-1)/2+1}{2} = \left(\tfrac{n-1}{k-1}-\tfrac{1}{4}(k^2-k-2)\right)\tbinom{k}{2}.\]
Furthermore, $\deg_{\Gc}(x) \equiv 0 \mod{k-1}$ for each $x \in V$ and hence, using the fact that $K_V$ is $K_k$-divisible since $n$ is $k$-admissible, we have that $G$ is $K_k$-divisible.
Now suppose for a contradiction there is a $K_k$-decomposition $\D$ of $G$. We have $\deg_{G}(z)=n-1-t(k-1)=\frac{k}{2}(k-1)^2$, so $z$ is a vertex of exactly $\frac{k}{2}(k-1)$ copies of $K_k$ in $\D$. But $z$ must be a vertex of at least $|A_0|=\frac{k}{2}(k-1)+1$ copies of $K_k$ in $\D$ because each edge in $\{xz:x \in A_0\}$ must occur in a different copy of $K_k$. This is a contradiction.

Finally, we prove (c). Let $V$ be a set of $n$ vertices, where $n=s(k-1)+2$ for some positive integer $s$ with $s^2 -s-1 \equiv 0 \mod{k}$, and let $z \in V$. Observe that $k$ is odd since $s^2 -s-1$ is odd. Let $G$ be a graph on vertex set $V$ such that $\Gc$ is the vertex-disjoint union of a star with $n-k$ edges centred at $z$ and a perfect matching on the remaining $k-1$ vertices. Note that $|E(G)| = \binom{n}{2}-n+\frac{1}{2}(k+1)$ and hence that $|E(G)| \equiv 0 \mod{\binom{k}{2}}$ because $n = s(k-1)+2$ and $s^2(k-1)^2 +s(k-1)+(k-1) \equiv 0 \mod{k(k-1)}$. Furthermore, $\deg_G(z)=k-1$ and $\deg_G(x)=n-2=s(k-1)$ for all $x \in V \setminus \{z\}$ and hence $G$ is $K_k$-divisible.  Let $U = N_G(z)$ and note that any $K_k$-decomposition of $G$ must include a copy of $K_k$ with vertex set $\{z\} \cup U$. But this is impossible because $\overline{G}[U]$ is a perfect matching on $k-1$ vertices.
\end{proof}

Note that the construction from the proof of Lemma~\ref{L:tightnessGenk}(b) cannot be converted into a counterexample to Theorem~\ref{T:evansBigGenk} because, by Fisher's inequality \cite{Fis}, $K_{k(k-1)/2+1}$ is not $K_k$-decomposable. Also observe that Theorem~\ref{T:evansBigGenk} is tight for almost all feasible values of $k$ and $n$, while Theorems~\ref{T:evansForAdmissibleOrder} and \ref{T:evansForAnyGraph} are tight only for some values of $k$. So there remains the possibility that the bounds in Theorems~\ref{T:evansForAdmissibleOrder} and \ref{T:evansForAnyGraph} can be improved for particular values of $k$.

We also require some examples of graphs that are not $K_3$-divisible to establish the tightness claims in the $k=3$ case of Theorem~\ref{T:evansForAnyGraph} and in Corollary~\ref{C:k=3}. Note that we have already shown that Corollary~\ref{C:k=3} is tight for $n \equiv 1,3 \mod{6}$ in Lemma~\ref{L:tightnessGenk}(b).

\begin{lemma}\label{L:k=3tightness}\phantom{a}\vspace*{-1.5mm}
\begin{itemize}
    \item[\textup{(a)}]
For each integer $n \geq 12$ with $n \equiv 0 \mod{6}$, there is a $K_3$-divisible graph $G$ of order $n$ with $|E(\overline{G})|=n$ that is not $K_3$-decomposable.
    \item[\textup{(b)}]
For each integer $n \geq 11$ such that $n \equiv 5 \mod{6}$ there is a $K_3$-divisible graph $G$ of order $n$ with $|E(\overline{G})|=\frac{1}{2}(3n-7)$  that is not $K_3$-decomposable.
    \item[\textup{(c)}]
For each integer $n \geq 8$ such that $n \equiv 2,4 \mod{6}$ there is a $K_3$-divisible graph $G$ of order $n$ with $|E(\overline{G})|=n+2$ that is not $K_3$-decomposable.
\end{itemize}
\end{lemma}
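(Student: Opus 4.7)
Plan: in each of the three parts I will construct a $K_3$-divisible graph $G$ and exhibit a distinguished vertex $z$ whose $G$-neighbourhood $N_G(z)$ induces a subgraph of $G$ with no perfect matching. This is an obstruction: in any $K_3$-decomposition of $G$, $z$ lies in exactly $\deg_G(z)/2$ triangles, and the pairs of $N_G(z)$-vertices used by these triangles form a perfect matching of $N_G(z)$ inside $G[N_G(z)]$.

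The gadgets I have in mind are as follows. For part (a), take $\overline{G}=K_{1,n-7}\cup K_2\cup K_4$ (vertex-disjoint, the star centred at $z$); then $N_G(z)$ is the $6$-vertex set supporting $K_2\cup K_4$, and $G[N_G(z)]=K_6-(K_2\cup K_4)=K_{2,4}$ has maximum matching $2<3=\deg_G(z)/2$. For part (c), take $\overline{G}=K_{1,n-7}\cup K_{3,3}$; then $G[N_G(z)]=K_6-K_{3,3}$ is a disjoint pair of triangles with maximum matching $2<3=\deg_G(z)/2$. Part (b) is slightly different because $n$ is odd, so $\overline{G}$ must have all even degrees, which forbids a star at $z$. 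The replacement I would use is the union of $t=(n-9)/2$ triangles meeting pairwise only in $z$ (giving $z$ even $\overline{G}$-degree $2t$), a disjoint $K_5$ on a set $W$, and three further isolated vertices $S=\{s_1,s_2,s_3\}$. Here $N_G(z)=W\cup S$ has size $8$, and $G[N_G(z)]$ consists of a $K_3$ on $S$, an independent $W$, and all $15$ edges between $S$ and $W$. Since $W$ is independent, every matching edge has a vertex in $S$, so the maximum matching has size at most $|S|=3<4=\deg_G(z)/2$.

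The remaining work in each case is to verify that $|E(\overline{G})|$ takes the prescribed value ($n$, $(3n-7)/2$, or $n+2$), that every vertex-degree of $\overline{G}$ has the correct parity so that $G$ has all even degrees, and that $|E(G)|\equiv 0\pmod{3}$. All three are short computations based on the residue of $n$ modulo $6$. The only conceptually delicate point is finding, in each case, a small gadget on $N_G(z)$ that simultaneously has no perfect matching, has the correct number of edges (to make the total $|E(\overline{G})|$ come out right), and is compatible with the required vertex-parity on $\overline{G}$; this parity constraint is precisely what forces the jump from the star/$K_4$-style construction in (a) and (c) to the triangle-bouquet/$K_5$-style construction in (b). Once the gadgets are in place the obstruction argument is uniform and immediate.
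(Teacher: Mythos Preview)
Your constructions for parts (a) and (b) are exactly those of the paper, and your matching-obstruction argument is the same as the paper's pigeonhole argument, just phrased more uniformly: the paper observes directly that the $|A|$ edges from $z$ into the independent set $A$ (the vertex set of the $K_4$ or $K_5$) require $|A|$ distinct triangles at $z$, exceeding $\deg_G(z)/2$, which is precisely why $G[N_G(z)]$ has no perfect matching.

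Your part (c) is correct but uses a genuinely different construction. The paper takes $\overline{G}$ to be a star $K_{1,n-3}$ centred at $z$ together with five extra edges $\{ux,uy,vx,vy,xy\}$, where $u,v$ are leaves and $x,y$ are the two vertices outside the star; then $N_G(z)=\{x,y\}$, and the unique candidate triangle $\{x,y,z\}$ is blocked by $xy\notin E(G)$. Your construction $\overline{G}=K_{1,n-7}\cup K_{3,3}$ instead gives $|N_G(z)|=6$ with $G[N_G(z)]=2K_3$, blocked by the matching argument. Both are valid; your version has the virtue of making all three parts fall under a single obstruction template (no perfect matching in $G[N_G(z)]$), whereas the paper's version keeps $\deg_G(z)$ as small as possible and uses a slightly more direct contradiction. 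The divisibility and edge-count checks you sketch are straightforward and go through in each case.
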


\begin{proof}
We first prove (a). Let $V$ be a set of $n$ vertices, where $n \geq 12$ and $n \equiv 0 \mod{6}$, and let $z \in V$. Let $G$ be a graph on vertex set $V$ such that $\Gc$ is the vertex-disjoint union of a star with $n-7$ edges centred at $z$, a copy of $K_4$ with some vertex set $A$, and a copy of $K_2$. Clearly $|E(\Gc)|=n$ and $G$ is $K_3$-divisible.
A $K_3$-decomposition of $G$ must contain exactly three copies of $K_3$ that have $z$ as one of their vertices, but each of the four edges between $z$ and a vertex in $A$ must occur in a different copy of $K_3$. So $G$ has no $K_3$-decomposition.

We now prove (b). Let $V$ be a set of $n$ vertices, where $n \geq 11$ and $n \equiv 5 \mod{6}$, and let $z \in V$. Let $G$ be a graph on vertex set $V$ such that $\Gc$ is the union of $\frac{1}{2}(n-9)$ edge-disjoint copies of $K_3$ whose vertex sets pairwise have intersection $\{z\}$, a copy of $K_5$ with some vertex set $A$ that is disjoint from the vertex set of each copy of $K_3$, and three isolated vertices. It is easy to check that $|E(\overline{G})|=\frac{1}{2}(3n-7)$ and $G$ is $K_3$-divisible. A $K_3$-decomposition of $G$ must contain exactly four copies of $K_3$ that have $z$ as one of their vertices, but each of the five edges between $z$ and a vertex in $A$ must occur in a different copy of $K_3$. So $G$ has no $K_3$-decomposition.

Finally we prove (c). Let $V$ be a set of $n$ vertices, where $n \geq 8$ and $n \equiv 2,4 \mod{6}$. Let $G$ be a graph on vertex set $V$ such that $\Gc$ is the union of a star with $n-3$ edges centred at $z$ and the graph with edge set $\{ux,uy,vx,vy,xy\}$, where $u$ and $v$ are distinct leaf vertices of the star and $x$ and $y$ are the two vertices of $V$ not in the star. It is easy to check that $|E(\overline{G})|=n+2$ and $G$ is $K_3$-divisible. A $K_3$-decomposition of $G$ must contain a copy of $K_3$ with vertex set $\{x,y,z\}$ but this is impossible since $xy \in E(\Gc)$.
\end{proof}

The rest of the paper is devoted to proving the first parts of the theorems and Corollary~\ref{C:k=3}. Our approach is based on the fact that $K_k$-divisible graphs with large order and high minimum degree are known to be $K_k$-decomposable. For each integer $k \geq 3$, $\delta_{K_k}$ is defined to be the infimum of all positive real numbers $\delta$ that satisfy the following: there is a positive integer $n_0$ such that every $K_k$-divisible graph of order $n>n_0$ and minimum degree at least $\delta n$ has a $K_k$-decomposition. Delcourt and Postle \cite{DelPos} have shown that $\delta_{K_3} \leq 0.82733$ and Montgomery \cite{Mon} has shown that $\delta_{K_k} \leq 1-\frac{1}{100k}$ for each $k \geq 4$. Both of these results rely on the work of Glock, K\"{u}hn, Lo, Montgomery and Osthus in \cite{GlockKuhnLoMontgomeryOsthus2019}. For our purposes here, it is enough to know that  $\delta_{K_k} < 1$ for each $k \geq 3$. Often, simply applying this fact to an almost complete graph will show it to be $K_k$-decomposable. However, this approach will not work if the graph contains vertices of low degree. In these situations we follow \cite{NenadovSudakovWagner2019} in deleting copies of $K_k$ from the graph until the vertices that began with low degree become isolated. We can then remove the isolated vertices and apply the fact that $\delta_{K_k}<1$ to the resulting graph to show that the original graph is $K_k$-decomposable. We will make use of the following well known theorems of Tur\'an and of Hajnal and Szemer\'{e}di.

\begin{theorem}[\cite{Turan1941}]\label{T:Turan}
Let $r \geq 2$ be an integer. If a graph $H$ has more than $\frac{r-2}{2r-2}|V(H)|^2$ edges, then it contains a copy of $K_{r}$.
\end{theorem}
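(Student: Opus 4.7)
The plan is to prove the contrapositive: every $K_r$-free graph $H$ on $n$ vertices satisfies $|E(H)| \leq \frac{r-2}{2(r-1)} n^2$. My approach is Zykov's symmetrization, which reduces the problem to understanding extremal complete multipartite graphs.

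The key lemma I would establish first is the following symmetrization step: if $H$ is $K_r$-free and $u, v$ are non-adjacent vertices with $\deg_H(u) \geq \deg_H(v)$, then the graph $H'$ obtained from $H$ by deleting all edges incident with $v$ and then joining $v$ to every vertex of $N_H(u)$ is still $K_r$-free and satisfies $|E(H')| \geq |E(H)|$. The $K_r$-freeness is the essential point: any $K_r$ in $H'$ containing $v$ would, after substituting $u$ for $v$, become a $K_r$ in $H$ (using that $u$ and $v$ are non-adjacent so $u$ does not belong to the $K_r$); and the edge count cannot drop because $\deg_{H'}(v) = \deg_H(u) \geq \deg_H(v)$, while all other degrees are unchanged.

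Iterating this operation, choosing at each step a pair of non-adjacent vertices of unequal degree (or, if all non-adjacent pairs have equal degrees, observing that non-adjacency is then already an equivalence relation), I would transform $H$ into a complete multipartite graph $H^*$ with $|E(H^*)| \geq |E(H)|$. Since $H^*$ is $K_r$-free, it has at most $r-1$ parts. The final step is to show that among all complete multipartite graphs on $n$ vertices with at most $r-1$ parts, the edge count is maximised by the balanced Tur\'an graph $T(n,r-1)$, whose size is at most $\frac{r-2}{2(r-1)}n^2$. A quick convexity argument suffices: if two parts have sizes differing by at least $2$, shifting one vertex from the larger to the smaller strictly increases the number of edges, so any maximiser is balanced; the edge count of the balanced graph is then a direct calculation.

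The main obstacle is a clean termination argument for the iteration in the second step. I would handle this by choosing a potential function such as $\sum_v \deg_H(v)^2$, which is non-decreasing under the symmetrization and strictly increases whenever the step is non-trivial, so the process must stabilise at a graph in which every pair of non-adjacent vertices has equal degree and is a twin of the other --- precisely a complete multipartite graph. All other steps are routine. (Alternative proofs by induction on $n$, deleting a vertex of degree at most $\tfrac{r-2}{r-1}n$, or by induction on $r$ via the Kruskal--Katona style neighbourhood argument, would also work but are less self-contained.)
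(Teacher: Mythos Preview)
The paper does not prove this statement; Tur\'an's theorem is quoted as a classical result from \cite{Turan1941} and invoked as a black box in the proof of Lemma~\ref{L:K_k decomposition1}. There is therefore no proof in the paper to compare against, and your Zykov symmetrization outline is a perfectly standard and legitimate route.

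That said, two technical points in your sketch need repair. First, the claim that ``all other degrees are unchanged'' after symmetrizing $v$ to $u$ is false: vertices in $N_H(u)\setminus N_H(v)$ gain a neighbour and vertices in $N_H(v)\setminus N_H(u)$ lose one. This does not damage the edge-count inequality, since the net change in $|E|$ is exactly $\deg_H(u)-\deg_H(v)\geq 0$, but it does undermine your termination argument. Second, neither of your proposed termination mechanisms works as stated: equal degrees among all non-adjacent pairs does \emph{not} force non-adjacency to be an equivalence relation (the $5$-cycle is a counterexample), and the potential $\sum_v \deg_H(v)^2$ is \emph{not} monotone under symmetrization (if some vertex of $N_H(v)\setminus N_H(u)$ has very high degree, the sum can drop). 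The clean fix is to argue directly on an edge-maximal $K_r$-free graph rather than iterate: if transitivity of non-adjacency fails at $u\not\sim v$, $v\not\sim w$, $u\sim w$, then either one of $u,w$ has degree strictly below $\deg(v)$ and symmetrizing it to $v$ strictly increases $|E|$, or $\deg(u),\deg(w)\leq\deg(v)$ and simultaneously symmetrizing both $u$ and $w$ to $v$ changes $|E|$ by $2\deg(v)-\deg(u)-\deg(w)+1\geq 1$. Either way maximality is contradicted, so the extremal graph is complete multipartite and the remainder of your argument goes through unchanged.
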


\begin{theorem}[\cite{HajnalSzemeredi1970}]\label{T:HS}
Let $r$ be a positive integer. If a graph $H$ has $|V(H)| \equiv 0 \mod{r}$ and $\delta(H) \geq \frac{r - 1}{r}|V(H)|$, then it contains a $K_{r}$-factor.
\end{theorem}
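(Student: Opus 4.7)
The plan is to prove Theorem~\ref{T:HS} by induction on $n=|V(H)|$, with $n$ divisible by $r$. The base case $n=r$ is immediate since $\delta(H)\geq r-1$ then forces $H=K_r$. For the inductive step the naive move of deleting one $K_r$ and applying induction fails, because removing $r$ vertices typically destroys the minimum-degree hypothesis; instead I would work globally with a \emph{near $K_r$-factor}, meaning a family $\{C_1,\dots,C_{m-1}\}$ of $m-1=n/r-1$ vertex-disjoint copies of $K_r$ in $H$ leaving an excess set $T$ of exactly $r$ vertices. Such a near-factor is easy to produce greedily: while many vertices remain uncovered, the hypothesis $\delta(H)\geq(r-1)n/r$ ensures that the density of the subgraph induced on the uncovered portion exceeds the threshold of Theorem~\ref{T:Turan}, yielding another copy of $K_r$ to peel off.

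The core step is to rearrange the near-factor so that $T$ itself becomes a clique. I would do this by an exchange argument. Define a potential measuring the number of non-edges inside $T$. A single exchange swaps some $v\in T$ with a vertex $u$ of some $C_i$; such a swap is admissible only when $\{v\}\cup(C_i\setminus\{u\})$ is still a copy of $K_r$, and it decreases the potential precisely when $u$ has more neighbours in $T\setminus\{v\}$ than $v$ does. The minimum-degree bound provides, via a pigeonhole count over the non-neighbourhoods of the vertices of $T$, enough candidate pairs $(u,v)$ to force progress in a broad range of configurations. When no single swap suffices, one chains several swaps together into an augmenting sequence through cliques $C_{i_1},\dots,C_{i_t}$, in analogy with augmenting paths in matching theory.

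The main obstacle is exactly this chaining step: isolated exchanges are easy, but showing that sufficiently long admissible exchange sequences always exist---while ruling out the cyclic obstructions in which swapping around a closed sequence of cliques returns to the original configuration---is the combinatorial heart of the Hajnal--Szemer\'edi argument, and it requires a delicate case analysis driven by the relationship between $T$ and the near-factor. I would expect this to consume the bulk of the work; the load can be reduced somewhat by borrowing the discharging-style framework developed by Kierstead and Kostochka, but the essential difficulty of locating augmenting structures remains.
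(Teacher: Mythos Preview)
The paper does not prove Theorem~\ref{T:HS}; it is quoted as a classical result of Hajnal and Szemer\'edi \cite{HajnalSzemeredi1970} and invoked as a black box in Case~2 of the proof of Lemma~\ref{L:K_k decomposition1}. So there is no proof in the paper to compare your proposal against.

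That said, your sketch contains a genuine gap at the step where you claim a near $K_r$-factor is ``easy to produce greedily'' via Tur\'an's theorem. The hypothesis $\delta(H)\geq (r-1)n/r$ is a \emph{global} condition on $H$; once you have peeled off several copies of $K_r$, the minimum degree in the subgraph induced on the remaining vertices can collapse. Concretely, after deleting $(1-c)n$ vertices the remaining $cn$ vertices each have degree at least $(r-1)n/r-(1-c)n=(c-\tfrac{1}{r})n$ in the induced subgraph, which is negative once $c<1/r$ and is far below the Tur\'an threshold $\tfrac{r-2}{r-1}\,cn$ whenever $c$ is only slightly larger than $1/r$. Hence the greedy peeling cannot be driven all the way down to a leftover set of size $r$, and obtaining a near-factor is essentially as hard as the full theorem. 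The actual proofs---both the original argument in \cite{HajnalSzemeredi1970} and the shorter Kierstead--Kostochka argument you allude to---build the whole factor in a single inductive or augmenting sweep, rather than first securing a near-factor by an independent density argument and then repairing the last clique.
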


The following simple inductive argument encapsulates the basics of our approach. Given a graph $G$ on an indexed vertex set $\{z_1,\ldots,z_s\}$ and two edges $z_iz_j$ and $z_{i'}z_{j'}$ of $G$ where $i<j$ and $i'<j'$, we say that $z_iz_j$ \emph{lexicographically precedes} $z_{i'}z_{j'}$ if either $i < i'$ or $i=i'$ and $j<j'$. Recall that $N_G(x,y)$ is the mutual neighbourhood $N_G(x) \cap N_G(y)$ of $x$ and $y$.

\begin{lemma} \label{L:K_k decomposition1}
Let $k \geq 3$ be a fixed integer and let $\gamma <1-\delta_{K_k}$ be a positive constant. For all sufficiently large integers $n$ the following holds.
Let $G$ be a $K_k$-divisible graph of order $n$, let $S=\{z_1,\ldots,z_s\}$ be an indexed subset of $V(G)$, and suppose that
\begin{itemize}
    \item[\textup{(i)}]
$|N_G(x) \setminus S| \geq (1-\gamma)n+(k-2)|N_G(x) \cap S|$ for each $x \in V(G) \setminus S$;
    \item[\textup{(ii)}]
either $N_G(z)=\emptyset$ or $|N_G(z) \setminus S| > (k-1)\gamma n +(k-2)|N_G(z) \cap S|$ for each $z \in S$;
    \item[\textup{(iii)}]
for any $i,j \in \{1,\ldots,s\}$ such that $i<j$ and $z_iz_j \in E(G)$ we have
\[|N_G(z_i,z_j) \setminus S| > (k-3)\gamma n+(k-2)\ell_G(z_{i}z_{j})\]
where $\ell_G(z_{i}z_{j})=|N_G(z_i) \cap \{z_1,\ldots,z_{j-1}\}|+|N_G(z_j) \cap \{z_1,\ldots,z_{i-1}\}|$ is the number of edges of $G[S]$ that are adjacent to $z_iz_j$ and lexicographically precede it.
\end{itemize}
Then $G$ has a $K_k$-decomposition.
\end{lemma}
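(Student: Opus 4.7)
The plan is to remove copies of $K_k$ greedily in two phases to eliminate all edges incident to $S$, then appeal to the asymptotic threshold $\delta_{K_k}<1$ on the residual graph. Throughout the process I will maintain the invariant that every $x\in V(G)\setminus S$ still has at least $(1-\gamma)n$ neighbours in $V(G)\setminus S$ in the current graph.

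\textbf{Phase A} processes the edges inside $G[S]$ in lexicographic order. When the next edge $z_iz_j$ is reached, each of the $\ell_G(z_iz_j)$ earlier-processed adjacent edges has removed a $K_k$ that used $k-2$ vertices of $V(G)\setminus S$, deleting at most $(k-2)\ell_G(z_iz_j)$ vertices from the mutual neighbourhood of $z_i$ and $z_j$. Condition (iii) then guarantees that this current mutual neighbourhood $N_{ij}$ has more than $(k-3)\gamma n$ vertices. The invariant gives each vertex of $N_{ij}$ at most $\gamma n$ non-neighbours inside $V(G)\setminus S$, so the induced subgraph on $N_{ij}$ has sufficiently many edges for Tur\'an's theorem (Theorem~\ref{T:Turan}) to yield a $K_{k-2}$. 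Combined with $\{z_i,z_j\}$, this produces the desired $K_k$, which we remove.

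\textbf{Phase B} then handles the edges from $S$ to $V(G)\setminus S$. For each $z\in S$ in turn, Phase A consumed exactly $(k-2)|N_G(z)\cap S|$ of the edges at $z$ going to $V(G)\setminus S$, so its current neighbourhood $M_z$ there has size $|N_G(z)\setminus S|-(k-2)|N_G(z)\cap S|$. By condition (ii) this is either $0$ (nothing to do) or greater than $(k-1)\gamma n$; it is divisible by $k-1$ because $\deg_G(z)\equiv0\pmod{k-1}$. The invariant gives each $x\in M_z$ at most $\gamma n<|M_z|/(k-1)$ non-neighbours in $M_z$, so the induced subgraph on $M_z$ has minimum degree at least $\tfrac{k-2}{k-1}|M_z|$. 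Hajnal--Szemer\'edi (Theorem~\ref{T:HS}) then produces a $K_{k-1}$-factor of $M_z$; adjoining $z$ to each $K_{k-1}$ yields the $K_k$'s we remove, covering all remaining edges at $z$.

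The invariant is preserved because a Phase A $K_k$ through $x\in V(G)\setminus S$ uses $k-3$ edges at $x$ inside $V(G)\setminus S$ and $2$ edges at $x$ to $S$, while a Phase B $K_k$ through $x$ uses $k-2$ edges at $x$ inside $V(G)\setminus S$ and $1$ edge at $x$ to $S$. Writing $p_x$ and $q_x$ for the respective participation counts, the $S$-edges available at $x$ give $2p_x+q_x\leq|N_G(x)\cap S|$, and so
\[(k-3)p_x+(k-2)q_x=(k-2)(2p_x+q_x)-(k-1)p_x\leq(k-2)|N_G(x)\cap S|,\]
which is exactly the slack in condition (i). After both phases the vertices of $S$ are isolated; delete them and call the residual graph $G'$. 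Then $G'$ is still $K_k$-divisible (each extracted $K_k$ preserves the divisibility conditions) and has minimum degree at least $(1-\gamma)n\geq\delta_{K_k}(n-s)$ because $\gamma<1-\delta_{K_k}$. For $n$ large, the definition of $\delta_{K_k}$ gives a $K_k$-decomposition of $G'$, which together with the $K_k$'s extracted in Phases A and B decomposes $G$. The main obstacle is verifying the invariant: the constants $(k-3)$, $(k-2)$, $(k-1)$ appearing in hypotheses (iii), (i), (ii) are calibrated precisely to the per-$K_k$ edge budgets in each phase, and only with this careful matching do Tur\'an and Hajnal--Szemer\'edi produce the required structures at every step.
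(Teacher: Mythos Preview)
Your proof is correct and follows essentially the same approach as the paper's: the paper phrases it as an induction on $\sum_{z\in S}\deg_G(z)$, first removing a $K_k$ on the lexicographically first edge of $G[S]$ via Tur\'an (your Phase~A) until $G[S]$ is empty, and then removing a $K_{k-1}$-factor at each remaining non-isolated $z\in S$ via Hajnal--Szemer\'edi (your Phase~B), before applying $\delta_{K_k}<1$ to the residual graph. The only cosmetic difference is bookkeeping: the paper maintains (i)--(iii) as invariants through the induction, whereas you maintain only the consequence $|N_{G'}(x)\setminus S|\geq(1-\gamma)n$ and derive the needed bounds at each step directly from the original hypotheses via your $p_x,q_x$ accounting---which is a slightly cleaner way to see why the constants $(k-3)$, $(k-2)$, $(k-1)$ in (iii), (i), (ii) are exactly what is required.
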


\begin{proof}
We prove the result by induction on the quantity $\sigma(G)=\sum_{z \in S}\deg_G(z)$. Let $s=|S|$. If $\sigma(G)=0$, then the vertices in $S$ are isolated and $\deg_G(x) \geq (1-\gamma)n \geq (1-\gamma)(n-s)$ for each $x \in V(G) \setminus S$ by (i). So the graph obtained from $G$ by deleting the vertices in $S$ is $K_k$-decomposable by the definition of $\delta_{K_k}$ since $\gamma <1-\delta_{K_k}$, and thus the result follows. So we may assume that $\sigma(G) > 0$.

We consider two cases according to whether $G[S]$ is empty. In each case we form a new graph $G'$ from $G$ by removing the edges of some number of copies of $K_k$ in $G$ and then complete the proof by showing that $G'$ satisfies the inductive hypotheses. Note that $G'$ will be $K_k$-divisible because $G$ is $K_k$-divisible. In what follows it will be useful to observe that (i) implies that the vertex $x$ is nonadjacent to at most $\gamma n$ vertices in $G$ (including itself) for each $x \in V(G) \setminus S$. \smallskip

\textbf{Case 1:} Suppose that $G[S]$ is not empty. Let $z_iz_j$, where $i <j$, be the lexicographically first edge in $G[S]$. Let $H$ be the subgraph of $G$ induced by $N_G(z_i,z_j) \setminus S$. Then $|V(H)| > (k-3)\gamma n$ by (iii). We claim that there is a subset $X$ of $V(H)$ such that $H[X]$ is a copy of $K_{k-2}$. If $k=3$, this is immediate because $|V(H)| > 0$. If $k \geq 4$, then $\deg_{H}(x) \geq |V(H)|-\gamma n > \frac{k-4}{k-3}|V(H)|$ for each $x \in V(H)$ where the first inequality follows by (i) and the second from $|V(H)| > (k-3)\gamma n$. So it follows from Theorem~\ref{T:Turan} that such an $X$ exists. Let $G'=G-K_B$ where $B=X\cup \{z_i,z_j\}$. Note that $\sigma(G') < \sigma(G)$, so it suffices to show that $G'$ satisfies (i), (ii) and (iii).

Observe that $|N_{G'}(x) \setminus S|=|N_{G}(x) \setminus S|-(k-3)$ and $|N_{G'}(x) \cap S|=|N_G(x) \cap S|-2$ for each $x \in X$, and $N_{G'}(x) =N_G(x)$ for each $x \in V \setminus (S \cup X)$. Thus $G'$ satisfies (i) because $G$ satisfies (i). Also, $|N_{G'}(z) \setminus S|=|N_{G}(z) \setminus S|-(k-2)$ and $|N_{G'}(z) \cap S|=|N_G(z) \cap S|-1$ for each $z \in\{z_i,z_j\}$, and $N_{G'}(z) =N_{G}(z)$ for each $z \in S \setminus \{z_i,z_j\}$. Thus $G'$ satisfies (ii) because $G$ satisfies (ii). If $G'[S]$ is empty, then $G'$ satisfies (iii) trivially. Otherwise, let $z_{i'}z_{j'}$ be an arbitrary edge in $G'[S]$ where $i'<j'$. If $\{i',j'\} \cap \{i,j\}=\emptyset$, then $N_{G'}(z_{i'},z_{j'})  \setminus S = N_{G}(z_{i'},z_{j'}) \setminus S$ and $\ell_{G'}(z_{i'}z_{j'})=\ell_{G}(z_{i'}z_{j'})$. Otherwise either $i'=i$ and $j'>j$ or $i'=j$ by our definition of $z_iz_j$. Then $|N_{G'}(z_{i'},z_{j'}) \setminus S| \geq |N_{G}(z_{i'},z_{j'}) \setminus S| - (k-2)$ and $\ell_{G'}(z_{i'}z_{j'})=\ell_{G}(z_{i'}z_{j'})-1$. Thus $G'$ satisfies (iii) because $G$ satisfies (iii).

\textbf{Case 2:} Suppose that $G[S]$ is empty. Because $\sigma(G)>0$, there is an $i \in \{1,\ldots,s\}$ such that $N_G(z_i) \neq \emptyset$. Let $H$ be the subgraph of $G$ induced by $N_G(z_i)$. By (ii), $|V(H)| > (k-1)\gamma n$ and, because $G$ is $K_k$-divisible, $|V(H)| = t(k-1)$ for some integer $t$. By (i), for each $x \in V(H)$, we have $\deg_{H}(x) \geq |V(H)|-\gamma n > \frac{k-2}{k-1}|V(H)|$. So Theorem~\ref{T:HS} implies that there is a partition $\{X_1,\ldots,X_t\}$ of $V(H)$ such that $H[X_j]$ is a copy of $K_{k-1}$ for each $j \in \{1,\ldots,t\}$. Let $G'=G-\bigcup_{j=1}^tK_{B_j}$ where $B_j=X_j \cup \{z_i\}$ for each $j \in \{1,\ldots,t\}$.

Observe that $|N_{G'}(x) \setminus S|=|N_{G}(x) \setminus S|-(k-2)$ and $|N_{G'}(x) \cap S|=|N_G(x) \cap S|-1$ for each $x \in V(H)$, and $N_{G'}(x)=N_G(x)$ for each $x \in V \setminus (S \cup V(H))$. Thus $G'$ satisfies (i) because $G$ satisfies (i). Also, $N_{G'}(z_i) = \emptyset$ and $N_{G'}(z) = N_{G}(z)$ for each $z \in S \setminus \{z_i\}$. Thus $G'$ satisfies (ii) because $G$ satisfies (ii). Furthermore,  $G'[S]$ is empty and so $G'$ satisfies (iii) trivially.
\end{proof}

Note that $|N_G(x) \cap S|$ in conditions (i) and (ii) of Lemma \ref{L:K_k decomposition1} is at most $s$, and $\ell_G(z_{i}z_{j})$ in condition (iii) is less than $2s$. This will be useful to remember when we apply Lemma~\ref{L:K_k decomposition1} below. We only require Lemma \ref{L:K_k decomposition1} in order to prove our next result,  Lemma~\ref{L:mutualNeighbourhoodDecomp}, which may be of some independent interest. It shows that we can guarantee a $K_k$-divisible graph with a positive proportion of non-edges has a $K_k$-decomposition if we further require that each edge is in sufficiently many triangles.

\begin{lemma}\label{L:mutualNeighbourhoodDecomp}
Let $k \geq 3$ be a fixed integer, and let $\gamma < 1-\delta_{K_k}$ be a positive constant. For any sufficiently large integer $n$, a $K_k$-divisible graph $G$ of order $n$ is $K_k$-decomposable if $|E(G)| \geq (1-\frac{1}{4k}\gamma^2)\binom{n}{2}$ and $|N_G(x,y)|>k \gamma n$ for each $xy \in E(G)$.
\end{lemma}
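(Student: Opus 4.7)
The plan is to reduce directly to Lemma~\ref{L:K_k decomposition1}, taking $S$ to be the set of vertices of $G$ whose complement-degree exceeds $\gamma n/2$ and indexing $S$ arbitrarily as $\{z_1,\ldots,z_s\}$. From the edge-count hypothesis, $\sum_{v} \deg_{\overline{G}}(v) = 2|E(\overline{G})| \leq \tfrac{\gamma^2}{4k}\,n(n-1)$, and counting only contributions from $S$ immediately gives $|S| \leq \gamma n/(2k)$. In particular, every $x \in V(G) \setminus S$ has $\deg_G(x) \geq n - 1 - \gamma n/2$.

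The key observation linking the two hypotheses is that the mutual-neighbourhood condition automatically forces every non-isolated vertex to have relatively large degree in $G$: if $z$ has any neighbour $x$ at all, then $xz \in E(G)$, so $|N_G(x,z)| > k\gamma n$, and since $N_G(x,z) \subseteq N_G(z)$ we conclude $\deg_G(z) > k\gamma n$. This is exactly what condition (ii) of Lemma~\ref{L:K_k decomposition1} requires: either $z \in S$ is isolated, or $\deg_G(z) > k\gamma n$, and deducting the at most $|S| \leq \gamma n/(2k)$ neighbours of $z$ in $S$ leaves $|N_G(z) \setminus S|$ comfortably larger than $(k-1)\gamma n + (k-2)|N_G(z) \cap S|$.

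Conditions (i) and (iii) are then verified by routine slack calculations using $|S| \leq \gamma n/(2k)$. For (i), the bound $\deg_G(x) \geq n - 1 - \gamma n/2$ for $x \notin S$ combines with $(k-2)|N_G(x) \cap S| \leq (k-2)\gamma n/(2k)$ to yield $|N_G(x) \setminus S| \geq (1-\gamma)n + (k-2)|N_G(x) \cap S|$ once $n$ is large enough that the surplus $\gamma n/(2k)$ absorbs the $-1$. For (iii), any edge $z_iz_j$ of $G[S]$ satisfies $|N_G(z_i,z_j) \setminus S| > k\gamma n - |S|$, while $\ell_G(z_iz_j) < 2|S|$ gives $(k-2)\ell_G(z_iz_j) < (k-2)\gamma n/k$, so the required inequality holds with plenty of room. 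Applying Lemma~\ref{L:K_k decomposition1} then supplies a $K_k$-decomposition of $G$. There is no serious obstacle; the only subtle point is picking the threshold for $S$ small enough that $|S|$ is $O(\gamma n/k)$ but large enough that vertices outside $S$ have degree close to $n$, and the threshold $\gamma n/2$ achieves both.
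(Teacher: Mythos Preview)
Your proposal is correct and follows essentially the same approach as the paper's own proof: both take $S$ to be the set of vertices with $\deg_{\overline{G}}(x) \geq \tfrac{1}{2}\gamma n$, bound $|S| < \tfrac{\gamma n}{2k}$ via the edge-count hypothesis, and then verify conditions (i)--(iii) of Lemma~\ref{L:K_k decomposition1} using the same slack calculations (including the key observation that any non-isolated $z\in S$ inherits $|N_G(z)|>k\gamma n$ from the mutual-neighbourhood hypothesis). The arithmetic you sketch matches the paper's, so there is nothing further to add.
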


\begin{proof}
Let $G$ be a $K_k$-divisible graph of order $n$ with $|E(G)| \geq (1-\frac{1}{4k}\gamma^2)\binom{n}{2}$ and $|N_G(x,y)|>k \gamma n$ for each $xy \in E(G)$. Note that $|E(\overline{G})| \leq \frac{1}{4k}\gamma^2\binom{n}{2}$. Let $S=\{x \in V(G): \deg_{\overline{G}} (x) \geq \frac{1}{2}\gamma n\}$ and $|S| = s$. So we have $\frac{1}{2}\gamma n s \leq 2|E(\overline{G})| \leq \frac{1}{2k}\gamma^2\binom{n}{2}$, and hence $s < \frac{1}{2k} \gamma n$. It suffices to show that $G$ and $S$ satisfy conditions (i), (ii) and (iii) of Lemma~\ref{L:K_k decomposition1}.

\textbf{(i)} Consider any vertex $x \in V(G) \setminus S$. We have $\deg_{G}(x) > (1-\frac{1}{2}\gamma)n - 1$ by the definition of $S$. Therefore, $|N_{G}(x) \setminus S| > (1-\frac{1}{2}\gamma)n - 1 -s > (1-\frac{k+1}{2k}\gamma)n - 1$. Thus, condition (i) of Lemma~\ref{L:K_k decomposition1} holds, noting that $(k-2)|N_{G}(x) \cap S| \leq (k-2)s < \frac{k-2}{2k}\gamma n$ in that condition.

\textbf{(ii)} Consider any vertex $x \in S$. If $N_G(x)=\emptyset$, then (ii) is satisfied for $x$. Otherwise, for any vertex $y \in V(G)$  such that $xy \in E(G)$, we have $|N_G(x,y)|> k\gamma n$ by our hypotheses,
and hence
\begin{equation}\label{E:mutualNeighbourhoodDecomp1}
|N_{G}(x) \setminus S| \geq |N_G(x,y) \setminus S|> k\gamma n-s>(k-\tfrac{1}{2k})\gamma n.
\end{equation}
Thus condition (ii) of Lemma~\ref{L:K_k decomposition1} holds, noting that $(k-2)|N_{G}(x) \cap S| \leq (k-2)s < \frac{k-2}{2k}\gamma n$ in that condition.

\textbf{(iii)} Consider any edge $xy \in E(G[S])$. By \eqref{E:mutualNeighbourhoodDecomp1}, we have $|N_G(x,y) \setminus S|>(k-\frac{1}{2k})\gamma n$. Thus, condition (iii) of Lemma~\ref{L:K_k decomposition1} holds, noting that $(k-2)\ell_{G}(xy) < 2(k-2)s < \frac{k-2}{k} \gamma n$ in that condition.
\end{proof}

\section{Proof of Theorem~\ref{T:evansBigGenk}}

Suppose that $(V,\A)$ is a partial $(n,k,1)$-design with $|\A|=\frac{n-1}{k-1}-k+1$ and that $G$ is its leave. One important situation in which we cannot complete $(V,\A)$ by applying Lemma~\ref{L:mutualNeighbourhoodDecomp} to $G$ is when there is a point $z \in V$ which is in nearly every block in $\A$ (since then edges of $G$ incident with $z$ will not be in enough triangles). In this case, completing $(V,\A)$ will necessarily involve finding a $K_{k-1}$-factor in $G[N_G(z)]$. Lemma~\ref{L:colouring} below allows us to accomplish this task. It is simpler and more natural to consider the complement and state the result in terms of a colouring of a union of cliques. A \emph{proper colouring of a graph $H$ with colour set $C$} is an assignment $\varphi:V(H) \rightarrow C$ of colours from $C$ to the vertices of $H$ such that adjacent vertices receive different colours. The \emph{colour class} of a colour $c \in C$ under $\varphi$ is the set $\varphi^{-1}(c)$ of all vertices to which $\varphi$ assigns colour $c$.

The basic strategy in the proof of Lemma~\ref{L:colouring} is the commonly-used one of colouring vertices greedily according to a degeneracy ordering. A \emph{degeneracy ordering} $v_1,\ldots,v_n$ of the vertices of a graph $H$ is one for which $v_i$ is a vertex of minimum degree in $H[\{v_1,\ldots,v_i\}]$ for each $i \in \{1,\ldots,n\}$. Such an ordering is easily obtained by choosing a vertex of minimum degree in a graph, deleting it and placing it last in the ordering, and repeating this procedure recursively. Sometimes our greedy strategy will get stuck, however, and in these cases we will be forced to recolour an already-coloured vertex.

\begin{lemma}\label{L:colouring}
Let $k$ and $a$ be  integers with $k \geq 3$ and $a \geq k-1$, let $V$ be a set of $a(k-1)$ vertices, and let $\A$ be a set of subsets of $V$ such that $|\A| \leq a-k+1$, $|A| \leq k$ for all $A \in \A$ and $|A \cap A'| \leq 1$ for all distinct $A,A' \in \A$. The graph $H$ with vertex set $V$ and edge set $\bigcup_{A \in \A}E(K_{A})$ has a proper colouring with $a$ colours such that each colour class has order $k-1$.
\end{lemma}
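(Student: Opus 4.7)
The plan is to process the vertices of $H$ in a degeneracy ordering and colour them greedily, resorting to an occasional local recolouring when the direct greedy step gets stuck.

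I would begin with the observation that $|E(H)| \leq |\A|\binom{k}{2} \leq (a-k+1)\binom{k}{2}$, and that this edge bound is inherited by every induced subgraph of $H$ (since each clique restricts to a clique on at most $k$ vertices). A straightforward averaging argument then controls the minimum degree of each $H[V']$, and hence the degeneracy of $H$. Fix a corresponding degeneracy ordering $v_1, \ldots, v_{a(k-1)}$ so that each $v_i$ is a vertex of minimum degree in $H[\{v_1, \ldots, v_i\}]$; this controls how many already-coloured neighbours $v_i$ can have when its turn comes.

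Colouring then proceeds in the order $v_1, \ldots, v_{a(k-1)}$. At step $i$, I would attempt to assign $v_i$ a colour from $\{1, \ldots, a\}$ that is neither used by an already-coloured neighbour of $v_i$ nor already at capacity $k-1$. The colours forbidden by the first condition number at most the min-degree bound at step $i$, while those forbidden by the second (the ``full'' colours) number at most $(i-1)/(k-1)$; whenever these two forbidden sets together fail to cover $\{1, \ldots, a\}$, a valid colour is available and the greedy step succeeds.

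When they do cover every colour, the greedy step fails and I would free one up by a local recolouring. The idea is to find a previously-coloured vertex $u$, typically lying in a clique with $v_i$, whose current colour $c$ is full, and to reassign $u$ to some other non-conflicting, non-full colour $c'$; the swap then frees $c$ for $v_i$. The main obstacle --- which I expect to occupy the bulk of the proof --- is guaranteeing that such a safe recolouring is always available and does not trigger an infinite cascade of further swaps. The hypothesis $|A \cap A'| \leq 1$ is critical here, since it tightly limits how many cliques any single vertex can lie in and hence how many competing constraints a recolouring attempt must simultaneously satisfy; combining this structural restriction with careful bookkeeping on which colours are at capacity at each stage should supply a valid swap in every stuck configuration and so complete the proof.
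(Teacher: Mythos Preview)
Your framework---degeneracy ordering, greedy colouring, and a single local swap when stuck---matches the paper's. But the swap you describe goes in the wrong direction, and the part you defer is the entire content of the lemma.

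You propose to take a vertex $u$ that is a \emph{neighbour} of $v_i$ whose current colour $c$ is full, and move $u$ to some non-full colour $c'$, thereby ``freeing $c$ for $v_i$''. This only frees $c$ if $u$ was the \emph{only} neighbour of $v_i$ currently coloured $c$; nothing forbids several pairwise non-adjacent neighbours of $v_i$ from sharing a colour, so removing one of them need not make that colour legal for $v_i$. (And if $C_F \cap C_N = \emptyset$ there is no such $u$ at all.) The paper swaps in the opposite direction. With $C_F$ the full colours and $C_N$ the colours on neighbours of $v_j$, one picks $c' \in C_N \setminus C_F$ (nonempty since $|C_F|<a$), sets $V' = \varphi^{-1}(c')$, and searches in $V_F^* = \bigcup_{c \in C_F \setminus C_N}\varphi^{-1}(c)$ for a vertex $u$ with no neighbour in $V'$. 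Recolouring $u$ to $c'$ is legal (non-full, no conflict), and the old colour of $u$---which was in $C_F \setminus C_N$ and is now at size $k-2$---is then legal for $v_j$. This is always a single swap, so the cascade worry never arises.

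All the work is then the claim that such a $u \in V_F^*$ exists, and this is exactly what you leave open. The paper proves it by contradiction: if every vertex of $V_F^*$ is adjacent into $V'$, one bounds $\sum_{x \in V_j}|\A_x|$ from below and shows it exceeds $k(a-k+1) \geq k|\A| \geq \sum_{A \in \A}|A|$. The count splits on whether $a_N = |C_N|$ is at most $k-1$ or at least $k$; in the latter case one first exploits the hypothesis $|A \cap A'| \leq 1$ twice---once on a clique of maximum size in $H_j$, once on a full colour class---to obtain $a \geq a_N + k - 1$, and this lower bound on $a$ is what makes the count go through. So your instinct that the linearity hypothesis is the crux is right, but it is used via this global double-counting rather than by directly bounding the constraints on a single candidate swap.
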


\begin{proof}
Let $C$ be a set of $a$ colours. For the duration of this proof we call a proper colouring \emph{legal} if its colour set is (a subset of) $C$ and each of its colour classes has order at most $k-1$. Let $v_1,\ldots,v_{a(k-1)}$ be a degeneracy ordering of the vertices in $V$. Let $V_i=\{v_1,\ldots,v_i\}$ and $H_i=H[V_i]$ for each $i \in \{1,\ldots,a(k-1)\}$. Clearly $H_{a}$ has a legal colouring as we may colour each vertex with a different colour. We assume that there is a legal colouring $\varphi_{j-1}$ of $H_{j-1}$ for some $j \in \{a+1,\ldots,a(k-1)\}$ and proceed to show that we can find a legal colouring of $\varphi_j$ of $H_{j}$. Extending $\varphi_{j-1}$ by assigning $v_j$ a new colour $c$ might fail to result in a legal colouring for two reasons: either $c$ may already be assigned by $\varphi_{j-1}$ to $k-1$ vertices or $c$ may be assigned by $\varphi_{j-1}$ to a vertex adjacent in $H_j$ to $v_j$.  Accordingly, let $\Cf=\{c \in C: |\varphi_{j-1}^{-1}(c)|=k-1\}$, let $\Cn$ be the set of colours in $C$ that are assigned by $\varphi_{j-1}$ to vertices adjacent in $H_j$ to $v_j$, and let $\an=|\Cn|$. We think of colours in $\Cf$ as ``full'' and those in $\Cn$ as ``neighbouring''.

If $C \setminus (\Cf \cup \Cn)$ is nonempty, then we can extend $\varphi_{j-1}$ to a legal colouring $\varphi_j$ of $H_j$ by assigning any colour in $C \setminus (\Cf \cup \Cn)$ to $v_j$. So we may assume that $\Cf \cup \Cn=C$. Since $j-1$, the number of vertices already coloured, is less than $a(k-1)$, it follows from the definition of $\Cf$ that $|\Cf| < a$ and hence that $\Cn \setminus \Cf \neq \emptyset$ and $\an \geq 1$. Let $c'$ be a colour in $\Cn \setminus \Cf$ and let $V'=\varphi^{-1}_{j-1}(c')$. Let $\Vf=\bigcup_{c \in \Cf \setminus \Cn}\varphi^{-1}_{j-1}(c)$ be the set of vertices already assigned a colour in $\Cf \setminus \Cn$. We aim to proceed by colouring $v_j$ with a colour in $\Cf \setminus \Cn$ but also recolouring a vertex of that colour with $c'$. We will be able to do this if the following claim holds.
\begin{claim}
There is a vertex in $\Vf$ that is not adjacent in $H_j$ to any vertex in $V'$.
\end{claim}
\noindent If this claim is true, we can let $u$ be such a vertex in $\Vf$ and let $\varphi_j$ be the colouring of $H_j$ such that $\varphi_j(v_j)=\varphi_{j-1}(u)$, $\varphi_j(u)=c'$, and $\varphi_j(x)=\varphi_{j-1}(x)$ for each $x \in V_{j-1} \setminus \{u\}$. Since $\varphi_{j-1}(u) \notin \Cn$ and $u$ is not adjacent in $H_j$ to any vertex in $V'$, it can be seen that $\varphi_j$ is a proper colouring and since $c' \notin \Cf$ it can be seen that $\varphi_j$ is a legal colouring. So it suffices to prove our claim.

\noindent \textbf{Proof of claim.} Suppose for a contradiction that each vertex in $\Vf$ is adjacent in $H_j$ to some vertex in $V'$. Observe that $V'$ and $\Vf$ are disjoint and that
\begin{equation}\label{E:verticesLowerBounds}
|V'| \geq 1, \qquad |\Vf|=(k-1)(a-\an) \qquad \text{and} \qquad |V_j \setminus (V' \cup \Vf)| \geq \an
\end{equation}
where the second of these follows because each of the $a-\an$ colours in $\Cf \setminus \Cn$ is assigned by $\varphi_{j-1}$ to exactly $k-1$ vertices in $V_{j-1} \setminus V'$ and the third follows because $v_j \in V_{j} \setminus (V' \cup \Vf)$ and each of the $\an-1$ colours in $\Cn \setminus \{c'\}$ is assigned by $\varphi_{j-1}$ to at least one vertex in $V_{j-1} \setminus (V' \cup \Vf)$.

Let $\Phi = \sum_{x \in V_j}|\A_x| -k(a+k-1)$. We will show that $\Phi>0$ and hence obtain a contradiction to the hypothesis of the lemma that $\A$ contains at most $a-k+1$ sets each of size at most $k$. We do this in two cases according to the value of $\an$.

\textbf{Case 1:} Suppose that $\an \leq k-1$. Observe that, for each $x \in V_j$, we have $|\A_x| \geq 1$  because $v_j$ is adjacent in $H_j$ to a vertex of colour $c'$ and thus $\deg_{H_j}(x) \geq \deg_{H_j}(v_j) \geq 1$ by the properties of the degeneracy ordering. So we have $\sum_{x \in V_j \setminus V'}|\A_x| \geq |V_j \setminus V'| \geq (k-1)(a-\an)+\an$ by \eqref{E:verticesLowerBounds}. Furthermore, each of the $|\Vf| +1$ vertices in $\Vf \cup \{v_j\}$ is in a set in $\A$ that also contains a vertex in $V'$ using our assumption that the claim fails and the fact that $c' \in \Cn$. Thus, because $|A|\leq k$ for each $A \in \A$, we have $\sum_{x \in V'}|\A_x| \geq \lceil \frac{1}{k-1}(|\Vf| +1) \rceil = a-\an+1$ where the equality follows by \eqref{E:verticesLowerBounds}. Using these lower bounds on $\sum_{x \in V_j \setminus V'}|\A_x|$ and $\sum_{x \in V'}|\A_x|$,
\[\Phi \geq (k-1)(a-\an)+a+1-k(a-k+1)=(k-1)(k-\an)+1.\]
Thus, since $\an \leq k-1$ by the conditions of this case,  $\Phi > 0$ and we have the required contradiction.

\textbf{Case 2:} Suppose that $\an \geq k$. We show this case cannot arise by obtaining a contradiction without the need for our assumption that the claim is false. Observe that $\deg_{H_j}(v_j) \geq \an$ by the definition of $\Cn$ and hence $\deg_{H_j}(x) \geq \an$ for each $x \in V_j$ by the properties of the degeneracy ordering. Thus we have $\deg_{H_j}(x) \leq |\A_{x}|(k-1)$ for each $x \in V_j$ and hence
\begin{equation}\label{E:hitsLowerBound}
|\A_x| \geq \tfrac{1}{k-1}\deg_{H_j}(x) \geq \tfrac{1}{k-1}\an \quad \text{for each $x \in V_j$.}
\end{equation}
So we have $\sum_{x \in V_j}|\A_x| \geq \frac{1}{k-1}\an|V_j| \geq \frac{1}{k-1}\an((k-1)(a-\an)+\an+1)$ by \eqref{E:verticesLowerBounds} and \eqref{E:hitsLowerBound}. Thus,
\begin{equation}\label{E:PhiLowerBound}
\Phi \geq \mfrac{\an((k-1)(a-\an)+\an+1)}{k-1} - k(a-k+1)=a(\an-k)+k(k-1)-\mfrac{(k-2)\an^2-\an}{k-1}.
\end{equation}
In order to show that $\Phi>0$ using \eqref{E:PhiLowerBound} we require a lower bound on $a$.

We first show that $\Cf \setminus \Cn$ is nonempty and then use this fact to obtain the required lower bound on $a$. Let $m=\max\{|A \cap V_j|:A \in \A\}$ and $A_1$ be a set in $\A$ such that $|A_1 \cap V_j|=m$. Using the definition of $m$ and a similar argument to the one used to establish \eqref{E:hitsLowerBound}, we see that $|\A_x| \geq \frac{1}{m-1}\deg_{H_j}(x) \geq \frac{1}{m-1}\an$ for each $x \in V_j$. So each vertex in $A_1 \cap V_j$ is in at least $\frac{1}{m-1}\an-1$ sets in $\A \setminus \{A_1\}$. Further, no set in $\A \setminus \{A_1\}$ can contain more than one vertex in $A_1 \cap V_j$. Thus $|\A|-1 \geq m(\frac{1}{m-1}\an-1)$ and hence, using $|\A| \leq a-k+1$, we have $a \geq \frac{m}{m-1}\an-m+k$. So we have that $a > \an$ since $m \leq k$ and hence that $\Cf \setminus \Cn$ is indeed nonempty.

Let $c''$ be a colour in $\Cf \setminus \Cn$, let $V''=\varphi^{-1}_{j-1}(c'')$, and note that $|V''|=k-1$ because $c'' \in \Cf$. No set in $\A$ can contain more than one vertex in $V''$ because $\varphi_{j-1}$ is a proper colouring, and each vertex in $V''$ is in at least $\frac{1}{k-1}\an$ sets in $\A$ by \eqref{E:hitsLowerBound}. Thus $a-k+1 \geq |\A| \geq \frac{1}{k-1}\an|V''|=\an$ and hence $a \geq \an+k-1$. Substituting this into \eqref{E:PhiLowerBound} and simplifying, remembering that $\an \geq k$ by the conditions of this case, we obtain
\[\Phi \geq \mfrac{\an(\an-k+2)}{k-1}>0\]
and we have the required contradiction.
\end{proof}

We observed in Lemma~\ref{L:tightnessGenk}(b) that, for each $k \geq 6$ with $k \equiv 2 \mod{4}$, to guarantee a $K_k$-decomposition of a graph $G$ of $k$-admissible order whose complement has at most $(\frac{n-1}{k-1}-k+1)\binom{k}{2}$ edges, we require more than  simply $G$ being $K_k$-divisible (note that $\frac{1}{4}(k^2-k-2) > k-1$ for each $k \geq 6$). It is through Lemma~\ref{L:colouring} that our proof uses the stronger assumption that $G$ is the leave of a partial $(n,k,1)$-design.  The conclusion of Lemma~\ref{L:colouring} does not hold if we merely require that $G$ be a graph of order $a(k-1)$ with at most $(a-k+1)\binom{k}{2}$ edges, even if we further demand that $G$ be $K_k$-divisible. For example, for any integer $k \geq 6$ such that $k \equiv 2 \mod{4}$, if we take $a=\frac{1}{4}(k^2+3k-2)$, then the graph of order $a(k-1)$ consisting of a copy of $K_{k(k-1)/2+1}$ and  isolated vertices has exactly $(a-k+1)\binom{k}{2}$ edges and is $K_k$-divisible, but clearly does not have a proper colouring with $a$ colours.

With Lemma~\ref{L:colouring} in hand we are now in a position to prove Theorem~\ref{T:evansBigGenk}. We find the required $K_k$-decomposition of the leave $G$ of the partial design by first applying Lemma~\ref{L:colouring} to obtain the copies of $K_k$ containing a particular vertex of minimum degree in $G$, and then using Lemma~\ref{L:mutualNeighbourhoodDecomp} to obtain the rest of the decomposition.

\begin{proof}[\textup{\textbf{Proof of Theorem~\ref{T:evansBigGenk}}}]
The second part of the theorem was proved as Lemma~\ref{L:tightnessGenk}(a), so it remains to prove the first part. Let $(V, \A)$ be a partial $(n,k,1)$-design such that $n$ is $k$-admissible and $|\A| \leq \frac{n-1}{k-1}-k+1$. Throughout the proof we assume that $n$ is large relative to $k$ and employ asymptotic notation with respect to this regime. Let $G$ be the leave of $(V, \A)$ and note that $G$ is $K_k$-divisible because $n$ is $k$-admissible. Let $z$ be a point such that $|\A_z| \geq |\A_x|$ for each $x \in V$ and let $\A'=\A \setminus \A_z$. Let $a$ be the  integer such that $|\A_z|=\frac{n-1}{k-1}-a$, and note that $a \geq k-1$ and $|\A'| \leq a-k+1$.

Let $U=N_G(z)$ and observe that $|A| = k$ for each $A \in \A'$, $|A \cap A'| \leq 1$ for all distinct $A,A' \in \A'$ and $\Gc[U] = \bigcup_{A \in \A'}K_{A \cap U}$. Thus, since $|U| = \deg_G(z) = a(k-1)$, we can apply Lemma~\ref{L:colouring} to show there is a proper colouring of $\Gc[U]$ with $a$ colours in which each colour class has order $k-1$. Thus, there is a partition $\mathcal{U}$ of $U$ such that $|\mathcal{U}|=a$ and $G[X]$ is a copy of $K_{k-1}$ for each $X \in \mathcal{U}$. Let $\B=\{X \cup \{z\}:X \in \mathcal{U}\}$.

Let $G'$ be the graph obtained from $G$ by deleting the edges in $\bigcup_{B \in \B}E(K_B)$ and the vertex $z$. It suffices to show that we can apply Lemma~\ref{L:mutualNeighbourhoodDecomp} to find a $K_k$-decomposition $\D'$ of $G'$, because then to complete $(V,\A)$ we can add the blocks in $\B$ along with blocks corresponding to the copies of $K_k$ in $\D'$. So it remains to show that $G'$ satisfies the hypotheses of Lemma \ref{L:mutualNeighbourhoodDecomp}. Since $G$ is $K_k$-divisible, so is $G'$. Observe that
\[G'= K_{V \setminus \{z\}}-\medop\bigcup_{A \in \A_z \cup \B} K_{A \setminus \{z\}} - \medop\bigcup_{A \in \A'} K_A,\]
and that each element of $V \setminus \{z\}$ is in exactly one set in $\{A\setminus\{z\}: A \in \A_z \cup \B\}$. Thus, for each $x \in V \setminus \{z\}$,
\begin{equation}\label{E:degTh1Proof}
\deg_{\overline{G'}}(x) = (k-1)|\A'_x|+k-2.
\end{equation}

Now
\begin{equation}\label{E:edgesTh1Proof}
|E(G')| = \mbinom{n}{2}- (|\A|+|\B|)\mbinom{k}{2} > \mbinom{n}{2}- k(n-1) =\mbinom{n}{2}-O(n)
\end{equation}
where the first inequality follows because $|\A| < \frac{n-1}{k-1}$ by supposition and $|\B| \leq \frac{n-1}{k-1}$ by definition. Now let $uv$ be an arbitrary edge of $G'$ and note that this implies $|\A'_{u} \cap \A'_{v}| =  0$. We have $|\A'_{u}|+|\A'_{v}| \leq \frac{2}{3}|\A|$ because $|\A'_{u}|,|\A'_{v}| \leq |\A_z|$ by the definition of $z$ and $|\A'_{u}|+|\A'_{v}| \leq |\A|-|\A_z|$. Then, using \eqref{E:degTh1Proof},
\begin{equation}\label{E:neighbourhoodTh1Proof}
|N_{G'}(u,v)| \geq n-1-(k-1)(|\A'_{u}|+|\A'_{v}|)-2(k-2) \geq \tfrac{1}{3}n-O(1)
\end{equation}
where the second inequality follows because $|\A'_{u}|+|\A'_{v}| \leq \frac{2}{3}|\A| < \frac{2(n-1)}{3(k-1)}$. In view of \eqref{E:edgesTh1Proof} and \eqref{E:neighbourhoodTh1Proof}, we can apply Lemma~\ref{L:mutualNeighbourhoodDecomp}, choosing $\gamma < \min\{1-\delta_{K_k},\frac{1}{3k}\}$, to find a $K_k$-decomposition $\D'$ of $G'$ and hence complete the proof.
\end{proof}

\section{Proof of Theorems~\ref{T:evansForAdmissibleOrder} and~\ref{T:evansForAnyGraph}}

The proofs of Theorems~\ref{T:evansForAdmissibleOrder} and~\ref{T:evansForAnyGraph} proceed along similar lines to the proof of Theorem~\ref{T:evansBigGenk}, although the details vary significantly. In each case, we first require a lemma analogous to Lemma~\ref{L:colouring}: this is Lemma~\ref{L:colouring2} in the case of Theorem~\ref{T:evansForAdmissibleOrder} and Lemma~\ref{L:colouring3} in the case of Theorem~\ref{T:evansForAnyGraph}. Like Lemma~\ref{L:colouring}, these lemmas are proved by colouring with a greedy algorithm that may recolour already-coloured vertices when required.

\begin{lemma}\label{L:colouring2}
Let $k$ and $a$ be  integers such that $k \geq 3$ and $a > \ell$, where $\ell=\frac{1}{4}(k^2-k-2)$.
Let $H$ be a graph of order $a(k-1)$ such that
$\sum_{x \in V(H)}\lceil\frac{1}{k-1}\deg_H(x)\rceil < k(a-\ell)$. Then $H$ has a proper colouring with $a$ colours such that each colour class contains $k-1$ vertices.
\end{lemma}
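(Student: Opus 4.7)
The plan is to adapt the proof of Lemma~\ref{L:colouring} step by step, replacing only the counting argument that settles the pivotal claim. Fix a set $C$ of $a$ colours and a degeneracy ordering $v_1,\ldots,v_{a(k-1)}$ of $V(H)$, write $H_j=H[\{v_1,\ldots,v_j\}]$, and call a colouring \emph{legal} if it is proper, uses only colours from $C$, and has every colour class of size at most $k-1$. Give each of $v_1,\ldots,v_a$ a distinct colour to obtain a legal colouring of $H_a$, then for $j\in\{a+1,\ldots,a(k-1)\}$ extend a legal colouring $\varphi_{j-1}$ of $H_{j-1}$ to one of $H_j$. Define $\Cf$, $\Cn$, $\an$, $c'\in\Cn\setminus\Cf$, $V'=\varphi_{j-1}^{-1}(c')$ and $\Vf=\bigcup_{c\in\Cf\setminus\Cn}\varphi_{j-1}^{-1}(c)$ exactly as in Lemma~\ref{L:colouring}. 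If $C\setminus(\Cf\cup\Cn)\neq\emptyset$, assign a spare colour to $v_j$; otherwise perform the same swap, which is legal provided that some vertex of $\Vf$ has no neighbour in $H_j$ among $V'$.

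The proof of that claim is the only new content. Suppose for a contradiction that every vertex of $\Vf$ has a neighbour in $V'$, and set $Q(x)=\lceil\tfrac{1}{k-1}\deg_{H_j}(x)\rceil$, so that $\sum_{x\in V_j}Q(x)\leq\sum_{x\in V(H)}\lceil\tfrac{1}{k-1}\deg_H(x)\rceil<k(a-\ell)$ by hypothesis; the aim is to derive the reverse inequality. Retain $|V'|\geq1$, $|\Vf|=(k-1)(a-\an)$ and $|V_j\setminus(V'\cup\Vf)|\geq\an$, and use the degeneracy ordering to obtain the uniform bound $\deg_{H_j}(x)\geq\an$ for every $x\in V_j$. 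Split into two cases on $\an$. When $\an$ lies below a threshold close to $(k\ell+1)/(k-1)$, the failure of the claim forces each of the $(k-1)(a-\an)+1$ vertices of $\Vf\cup\{v_j\}$ to send an edge into $V'$, giving $\sum_{x\in V'}\deg_{H_j}(x)\geq(k-1)(a-\an)+1$ and so $\sum_{x\in V'}Q(x)\geq a-\an+1$; together with $Q(x)\geq1$ on $V_j\setminus V'$ this yields $\sum_{x\in V_j}Q(x)\geq(k-1)(a-\an)+a+1$, which is $\geq k(a-\ell)$ in the chosen range of $\an$. When $\an$ exceeds the threshold, feed the uniform bound $Q(x)\geq\lceil\an/(k-1)\rceil$ into $\sum_{x\in V_j}Q(x)\geq|V_j|\lceil\an/(k-1)\rceil$ and refine it, as in Case~2 of Lemma~\ref{L:colouring}, by considering a colour class $V''\subseteq\Vf$ of size $k-1$: the vertices of $V''$ are pairwise non-adjacent yet each have $Q$-value at least $\lceil\an/(k-1)\rceil$, which forces $a$ to be large relative to $\an$ and thereby pushes the total above $k(a-\ell)$.

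The main obstacle is calibrating the case boundary so that it is exactly compatible with $\ell=\tfrac{1}{4}(k^2-k-2)$: Case~1 is easy and pins down the value of $\ell$ by optimisation over $\an$, but Case~2 requires a careful interaction between the uniform $Q$-bound and the $V''$-bound to cover every $\an$ above the threshold within the available budget $a>\ell$. Apart from this delicate counting, all other ingredients---the degeneracy framework, the base colouring of $H_a$, the swap mechanism, and the verification that the resulting $\varphi_j$ is legal---transfer verbatim from the proof of Lemma~\ref{L:colouring}.
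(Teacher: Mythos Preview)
Your set-up, Case~1, and the swap mechanism are all correct and match the paper. The gap is in Case~2. The $V''$ argument you propose to import from Lemma~\ref{L:colouring} relied essentially on the block structure there: no set of $\A$ could contain two vertices of $V''$, so $|\A|\geq |V''|\cdot\frac{\an}{k-1}=\an$ and hence $a\geq\an+k-1$. In the present lemma there is no $\A$, and the only property you retain---that the $k-1$ vertices of $V''$ are pairwise non-adjacent with $Q$-value at least $\lceil\an/(k-1)\rceil$---yields nothing beyond $\sum_{x\in V''}Q(x)\geq\an$, which via the hypothesis gives only $a>\ell+\an/k$. For $\an$ near $\binom{k}{2}$ this is strictly weaker than the trivial bound $a\geq\an$ (which holds simply because $\Cn\subseteq C$ and $|C|=a$), and it is not enough to push $\Phi\geq 0$; indeed the very existence of $V''$ already presupposes $\Cf\setminus\Cn\neq\emptyset$, i.e.\ $a>\an$, so the argument is also circular in its first step.

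In fact Case~2 is simpler than you anticipate and needs no $V''$ at all. The paper splits at $\an\leq k-1$ versus $\an\geq k$. For $\an\geq k$ it feeds the uniform bound $Q(x)\geq\an/(k-1)$ and $|V_j|\geq(k-1)(a-\an)+\an+1$ into $\sum_{x\in V_j}Q(x)$, then substitutes only the trivial $a\geq\an$; the required inequality reduces to $\an(k^2-k-1-\an)\leq k(k-1)\ell$. This holds for every \emph{integer} $\an$ because the left side, as a quadratic in $\an$, attains its integer maximum at $\an\in\{\binom{k}{2}-1,\binom{k}{2}\}$, where it equals $k(k-1)\ell$ exactly. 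So the ``delicate calibration'' of the case boundary you worry about is not the obstacle; the integrality of $\an$ together with the trivial bound $a\geq\an$ is what closes Case~2.
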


\begin{proof}
Note that $\ell$ may not be an integer, but $2\ell=\binom{k}{2}-1$ is an integer. The set-up of the proof proceeds identically to that of the proof of Lemma~\ref{L:colouring} up to and including the paragraph after the claim. So we adopt all the notation defined up to that point and see that it suffices to prove the claim there, which we restate below.
\begin{claim}
There is a vertex in $\Vf$ that is not adjacent in $H_j$ to any vertex in $V'$.
\end{claim}

\noindent \textbf{Proof of claim.} Recall that $v_1,\ldots,v_{a(k-1)}$ is a degeneracy ordering of $V(H)$, $V_i=\{v_1,\ldots,v_i\}$ and $H_i=H[V_i]$ for each $i \in \{1,\ldots a(k-1)\}$ and $\varphi_{j-1}$ is a legal colouring of $H_{j-1}$ with a set $C$ of $a$ colours for some $j \in \{a+1,\ldots,a(k-1)\}$. Further, $V'=\varphi^{-1}_{j-1}(c')$ and $\Vf=\bigcup_{c\in \Cf \setminus \Cn}\varphi^{-1}_{j-1}(c)$ where $c'$ is a colour in $\Cn \setminus \Cf$, $\Cf=\{c \in C: |\varphi_{j-1}^{-1}(c)|=k-1\}$ and $\Cn$ is the set of $\an \geq 1$ colours in $C$ that are assigned by $\varphi_{j-1}$ to vertices adjacent in $H_j$ to $v_j$.

Suppose for a contradiction that each vertex in $\Vf$ is adjacent in $H_j$ to some vertex in $V'$. As in the proof of Lemma~\ref{L:colouring}, observe that $V'$ and $\Vf$ are disjoint and that
\begin{equation}\label{E:verticesLowerBounds2}
|V'| \geq 1, \qquad |\Vf|=(k-1)(a-\an) \qquad \text{and} \qquad |V_j \setminus (V' \cup \Vf)| \geq \an.
\end{equation}

Let $r_x = \lceil\frac{1}{k-1}\deg_{H_j}(x)\rceil$ for each $x \in V$ and let $\Phi=\sum_{x \in V_j}r_x-k(a-\ell)$. We will complete the proof by showing that $\Phi \geq 0$ and hence obtaining a contradiction to the hypothesis of the lemma that $\sum_{x \in V(H)}\lceil\frac{1}{k-1}\deg_H(x)\rceil < k(a-\ell)$. We do this in two cases according to the value of $\an$.

\textbf{Case 1:} Suppose that $\an \leq k-1$. Observe that, for each $x \in V_j$, we have $r_{x} \geq 1$ for all $x \in V_j$  because $v_j$ is adjacent in $H_j$ to a vertex of colour $c'$ and thus $\deg_{H_j}(x) \geq \deg_{H_j}(v_j) \geq 1$ by the properties of the degeneracy ordering. So we have $\sum_{x \in V_j \setminus V'}r_x \geq |V_j \setminus V'| \geq (k-1)(a-\an)+\an$ by \eqref{E:verticesLowerBounds2}. Furthermore, each of the $|\Vf| +1$ vertices in $\Vf \cup \{v_j\}$ is adjacent in $H_j$ to a vertex in $V'$ using our assumption that the claim fails and the fact that $c' \in \Cn$. Thus, $\sum_{x \in V'}\deg_{H_j}(x) \geq |\Vf| +1$ and so $\sum_{x \in V'}r_x \geq \lceil \frac{1}{k-1}(|\Vf| +1) \rceil = a-\an+1$ where the equality follows by \eqref{E:verticesLowerBounds2}. Using these lower bounds on $\sum_{x \in V \setminus V'}r_x$ and $\sum_{x \in V'}r_x$,
\[\Phi \geq (k-1)(a-\an)+a+1-k(a-\ell)=k \ell-\an(k-1)+1 \geq k(\ell-k+2),\]
where the last inequality follows by using the condition of this case that $\an \leq k-1$ and simplifying. Thus $\Phi \geq 0$ and we have the required contradiction because it is easily checked that $\ell \geq k-2$ since $k \geq 3$.

\textbf{Case 2:} Suppose that $\an \geq k$. We show this case cannot arise by obtaining a contradiction without the need for our assumption that the claim is false. Observe that $\deg_{H_j}(v_j) \geq \an$ by the definition of $\Cn$ and hence $\deg_{H_j}(x) \geq \an$ for each $x \in V_j$ by the properties of the degeneracy ordering. Thus,
\begin{equation}\label{E:hitsLowerBound2}
r_x \geq \tfrac{1}{k-1}\an \quad \text{for each $x \in V_j$.}
\end{equation}
So we have $\sum_{x \in V_j}r_x \geq \frac{1}{k-1}\an|V_j| \geq \frac{1}{k-1}\an((k-1)(a-\an)+\an+1)$  by \eqref{E:verticesLowerBounds2} and \eqref{E:hitsLowerBound2}. Thus,
\[\Phi \geq \mfrac{\an((k-1)(a-\an)+\an+1)}{k-1} - k(a-\ell)=a(\an-k)+k\ell-\mfrac{(k-2)\an^2-\an}{k-1} \geq k \ell-\mfrac{\an(k^2-k-1-\an)}{k-1} \]
where for the last inequality we substituted $a \geq \an$ in view of the condition of this case that $\an \geq k$. It is routine to check that $\an(k^2-k-1-\an) \leq k(k-1) \ell$ using the definition of $\ell$ and the fact that either $\an \leq \binom{k}{2}-1$ or $\an \geq \binom{k}{2}$ since $\an$ is an integer. Thus $\Phi \geq 0$ and we have the required contradiction.
\end{proof}

As suggested by the proof of Lemma~\ref{L:tightnessGenk}(b), for any $k \equiv 2 \mod{4}$, the tightness of Lemma~\ref{L:colouring2} can be seen by taking $a=\frac{1}{2}k(k-1)$ and considering the graph of order $a(k-1)$  consisting of a copy of $K_{a+1}$ and isolated vertices.

\begin{proof}[\textbf{\textup{Proof of Theorem~\ref{T:evansForAdmissibleOrder}}}]
The second part of the theorem follows by Lemma~\ref{L:tightnessGenk}(b), so it remains to prove the first part. Let $G$ be a $K_k$-divisible graph of order $n$ such that $n \equiv 1 \mod{(k-1)}$ and $|E(\overline{G})| < (\tfrac{n-1}{k-1}-\ell)\tbinom{k}{2}$. Throughout the proof we assume that $n$ is large relative to $k$.

Observe that $\deg_{\Gc}(x) \equiv 0 \mod{k-1}$ for each $x \in V(G)$ since $G$ is $K_k$-divisible and $n \equiv 1 \mod{(k-1)}$. Let $z$ be a vertex of minimum degree in $G$ and let $U=N_G(z)$. Since $G$ is $K_k$-divisible there is an integer $a$ such that $|U|=\deg_{G}(z)=a(k-1)$.  Now $\deg_{\Gc}(z)=n-1-a(k-1)$, and each of the $n-1-a(k-1)$ vertices in $N_{\Gc}(z)$ has positive degree in $\Gc$ and hence has degree at least $k-1$. Thus $\sum_{x \in V(G) \setminus U}\deg_{\Gc}(x) \geq k(n-1-a(k-1))$, so
\[k(n-1-a(k-1))+\medop\sum_{x \in U}\deg_{\Gc}(x) \leq 2|E(\Gc)|< k(k-1)\left(\tfrac{n-1}{k-1}-\ell\right)\]
and hence $\sum_{x \in U}\deg_{\Gc}(x) < k(k-1)(a-\ell)$.  Thus, again using $\deg_{\Gc}(x) \equiv 0 \mod{k-1}$ for each $x \in V(G)$,
\[\medop\sum_{x \in U}\lceil\tfrac{1}{k-1}\deg_{\Gc[U]}(x)\rceil  \leq \medop\sum_{x \in U}\lceil\tfrac{1}{k-1}\deg_{\Gc}(x)\rceil = \medop\sum_{x \in U}\tfrac{1}{k-1}\deg_{\Gc}(x) < k(a-\ell).\]
So we can apply Lemma~\ref{L:colouring2} to find a proper colouring of  $\Gc[U]$ with $a$ colours in which each colour class has order $k-1$. Thus, there is a partition $\mathcal{U}$ of $U$ such that $|\mathcal{U}|=a$ and $G[X]$ is a copy of $K_{k-1}$ for each $X \in \mathcal{U}$. Let $\D=\{K_{X \cup \{z\}}:X \in \mathcal{U}\}$.

Let $G'$ be the graph obtained from $G$ by removing the edges of each copy of $K_k$ in $\D$ and then deleting the (now isolated) vertex $z$. It suffices to show that we can apply Lemma~\ref{L:mutualNeighbourhoodDecomp} to find a $K_k$-decomposition $\D'$ of $G'$, for then $\D \cup \D'$ will be a $K_k$-decomposition of $G$. Since $G$ is $K_k$-divisible, so is $G'$. Now,
\begin{equation}\label{E:edgesTh2Proof}
|E(G')| = \mbinom{n}{2}-|E(\Gc)|-|\D|\mbinom{k}{2} > \mbinom{n}{2}-k(n-1) =\mbinom{n}{2}-O(n)
\end{equation}
where the first inequality follows because $|E(\Gc)| < \frac{n-1}{k-1}\binom{k}{2}$ and $|\D| \leq \frac{n-1}{k-1}$. Let $uv$ be an arbitrary edge of $G'$, let $T=(N_{\Gc}(u) \cup N_{\Gc}(v)) \setminus \{z\}$, and note that $u,v \notin T$. Each vertex in $T$ has positive degree in $\Gc$ and hence degree at least $k-1$. Also $\deg_{\Gc}(u)+\deg_{\Gc}(v) \geq |T|$ and hence $\deg_{\Gc}(z) \geq \frac{1}{2}|T|$ by the definition of $z$. Thus we have
\[\tfrac{3}{2}|T|+(k-1)|T| \leq \medop\sum_{x \in \{u,v,z\}}\deg_{\Gc}(x)+\medop\sum_{x \in T}\deg_{\Gc}(x) \leq 2|E(\Gc)| < kn\]
and hence $|T| \leq \frac{2k}{2k+1}n$. So we have $|T'| \leq \frac{2k}{2k+1}n+O(1)$, where $T'=N_{\overline{G'}}(u) \cup N_{\overline{G'}}(v)$, because it follows from the definition of $G'$ that $T'$ can be obtained from $T$ by adding at most $2(k-1)$ vertices. Thus $|N_{G'}(u,v)| = n-3-|T'| \geq \frac{1}{2k+1}n-O(1)$. By this fact and \eqref{E:edgesTh2Proof},  we can apply Lemma~\ref{L:mutualNeighbourhoodDecomp}, choosing $\gamma< \min\{1-\delta_{K_k},\frac{1}{k(2k+1)}\}$, to find a $K_k$-decomposition $\D'$ of $G'$ and hence complete the proof.
\end{proof}

In Lemma~\ref{L:colouring3}, we are forced to prove a slightly stronger result for $k=3$ so as to eventually obtain a tight result for $k=3$ in Theorem~\ref{T:evansForAnyGraph}.

\begin{lemma}\label{L:colouring3}
Let $k$ and $a$ be  integers such that $k \geq 3$ and $a \geq 1$. Let $H$ be a graph of order $a(k-1)$ such that either
\begin{itemize}
    \item[\textup{(i)}]
$\sum_{x \in V(H)}\lceil\frac{1}{k-1}(\deg_H(x)-1)\rceil \leq a-2$; or
    \item[\textup{(ii)}]
$k=3$, $\Delta(H) \leq 2a-2$, and $\sum_{x \in V(H)}\lceil\frac{1}{k-1}(\deg_H(x)-1)\rceil \leq a$.
\end{itemize}
Then $H$ has a proper colouring with $a$ colours such that each colour class has order $k-1$.
\end{lemma}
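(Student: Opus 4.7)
I would follow the template of the proofs of Lemmas~\ref{L:colouring} and~\ref{L:colouring2}. Let $v_1,\ldots,v_{a(k-1)}$ be a degeneracy ordering of $V(H)$ and inductively build, for $j=a,a+1,\ldots,a(k-1)$, a legal colouring $\varphi_j$ of $H_j=H[V_j]$ using a fixed set $C$ of $a$ colours with every colour class of size at most $k-1$. The base $j=a$ is trivial. In the induction step, if $\Cf\cup\Cn\neq C$, extend $\varphi_{j-1}$ by assigning $v_j$ any colour in $C\setminus(\Cf\cup\Cn)$; otherwise pick $c'\in\Cn\setminus\Cf$ (nonempty as $|\Cf|<a$), let $V'=\varphi_{j-1}^{-1}(c')$ and $\Vf=\bigcup_{c\in\Cf\setminus\Cn}\varphi_{j-1}^{-1}(c)$, find some $u\in\Vf$ non-adjacent in $H_j$ to every vertex of $V'$, and swap colours (setting $\varphi_j(v_j)=\varphi_{j-1}(u)$ and $\varphi_j(u)=c'$). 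So the crux is again to prove the claim that such a $u\in\Vf$ exists.

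I would prove the claim by contradiction. Define $r_x=\lceil\tfrac{1}{k-1}(\deg_H(x)-1)\rceil$, so that (i) reads $\sum_{x\in V(H)}r_x\leq a-2$ and (ii) reads $k=3$, $\Delta(H)\leq 2a-2$ and $\sum_{x\in V(H)}r_x\leq a$. I split on $\an$, noting $\an\geq 1$ since $c'\in\Cn$.

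\emph{Case A: $\an\geq 2$.} This case will be shown not to arise, independently of the claim's failure. By the degeneracy ordering, $\deg_{H_j}(x)\geq\deg_{H_j}(v_j)\geq\an\geq 2$ for every $x\in V_j$, whence $r_x\geq\lceil\tfrac{1}{k-1}\rceil=1$, and summing yields $\sum_{x\in V(H)}r_x\geq|V_j|=j\geq a+1$, contradicting both (i) and (ii).

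\emph{Case B: $\an=1$.} Here $\Cn=\{c'\}$ and $\Cf=C\setminus\{c'\}$, so $|\Vf|=(k-1)(a-1)$. Since the vertices of $V_{j-1}$ are partitioned as $V'\cup\Vf$, the bound $j-1\leq a(k-1)-1$ forces $|V'|\leq k-2$. The failure of the claim means each of the $(k-1)(a-1)+1$ vertices in $\Vf\cup\{v_j\}$ has at least one neighbour in $V'$, so $\sum_{y\in V'}\deg_H(y)\geq(k-1)(a-1)+1$. Under~(i), using $r_y\geq(\deg_H(y)-1)/(k-1)$,
\[\sum_{y\in V'}r_y\geq\frac{(k-1)(a-1)+1-|V'|}{k-1}\geq a-2+\frac{2}{k-1}>a-2,\]
and since the left side is an integer this yields $\sum_{y\in V'}r_y\geq a-1$, contradicting~(i). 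Under~(ii), $k=3$ forces $|V'|\leq k-2=1$ and hence $|V'|=1$; the single vertex $y\in V'$ then satisfies $\deg_H(y)\geq|\Vf|+1=2a-1$, violating $\Delta(H)\leq 2a-2$.

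The main obstacle is Case B under hypothesis~(ii): the bound $\sum r_x\leq a$ is too weak for a pure counting-on-$V'$ argument to give a contradiction as under~(i), so one must combine the structural fact $|V'|\leq 1$ (available only because $k=3$) with the explicit $\Delta$-constraint.
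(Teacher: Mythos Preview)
Your proposal is correct and follows essentially the same route as the paper's proof: the identical greedy/degeneracy set-up from Lemmas~\ref{L:colouring} and~\ref{L:colouring2}, the same claim about finding $u\in\Vf$ with no neighbour in $V'$, and the same case split on $\an$. Your Case~A matches the paper's Case~2 verbatim, and your Case~B matches the paper's Case~1, including the crucial use of $|V'|\leq k-2$ together with the $\Delta$-bound when $k=3$; the only cosmetic differences are that you fold $v_j$ into the neighbour count from the outset and derive $|V'|\leq k-2$ via the partition $V_{j-1}=V'\cup\Vf$ rather than directly from $c'\notin\Cf$.
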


\begin{proof}
The set-up of the proof proceeds identically to that of the proof of Lemma~\ref{L:colouring} up to and including the paragraph after the claim. So we adopt all the notation defined up to that point and see that it suffices to prove the claim there, which we restate below.
\begin{claim}
There is a vertex in $\Vf$ that is not adjacent in $H_j$ to any vertex in $V'$.
\end{claim}

\noindent \textbf{Proof of claim.} Recall that $v_1,\ldots,v_{a(k-1)}$ is a degeneracy ordering of $V(H)$, $V_i=\{v_1,\ldots,v_i\}$ and $H_i=H[V_i]$ for each $i \in \{1,\ldots a(k-1)\}$ and $\varphi_{j-1}$ is a legal colouring of $H_{j-1}$ with a set $C$ of $a$ colours for some $j \in \{a+1,\ldots,a(k-1)\}$. Further, $V'=\varphi^{-1}_{j-1}(c')$ and $\Vf=\bigcup_{c\in \Cf \setminus \Cn}\varphi^{-1}_{j-1}(c)$ where $c'$ is a colour in $\Cn \setminus \Cf$, $\Cf=\{c \in C: |\varphi_{j-1}^{-1}(c)|=k-1\}$ and $\Cn$ is the set of $\an \geq 1$ colours in $C$ that are assigned by $\varphi_{j-1}$ to vertices adjacent in $H_j$ to $v_j$.

Suppose for a contradiction that each vertex in $\Vf$ is adjacent in $H_j$ to some vertex in $V'$. As in the proof of Lemma~\ref{L:colouring}, and further noting that $|V'| \leq k-2$ because $c' \notin \Cf$, observe that $V'$ and $\Vf$ are disjoint and that
\begin{equation}\label{E:verticesLowerBounds3}
k-2 \geq |V'| \geq 1 \qquad \text{and} \qquad |\Vf|=(k-1)(a-\an).
\end{equation}
We consider two cases based on whether $\an=1$.

\textbf{Case 1:} Suppose $\an=1$. Then, since $c' \in \Cn \setminus \Cf$, it must be the case that $\Cn = \Cn \setminus \Cf=\{c'\}$. It follows that $\Cf=C\setminus\{c'\}$ because $\Cf \cup \Cn=C$. Now each vertex in $\Vf$ is adjacent in $H_j$ to a vertex in $V'$ using our assumption that the claim fails. Thus $\sum_{x \in V'}\deg_{H_j}(x) \geq |\Vf|$ and so
\[\medop\sum_{x \in V'}\tfrac{1}{k-1}(\deg_{H_j}(x)-1) \geq \tfrac{1}{k-1}|\Vf| - \tfrac{1}{k-1}|V'| > a-2\]
where the last inequality follows because $|V'| \leq k-2$ and $\Vf=(k-1)(a-1)$ by \eqref{E:verticesLowerBounds3} since $\an=1$.  This contradicts (i) of our hypotheses, so we may assume that (ii) holds and hence $k=3$ and $\Delta(H) \leq 2a-2$. Then $|V'|=\{y\}$ for some $y \in V_{j-1}$ because $1 \leq |V'|\leq k-2=1$ by \eqref{E:verticesLowerBounds3}. Thus $y$ is adjacent in $H_j$ to each of the $(k-1)(a-1)=2a-2$ vertices in $\Vf$ by our assumption that the claim fails. Furthermore, $y$ is adjacent in $H_j$ to $v_j$ since $c' \in \Cn$. Thus $\deg_{H_j}(y) \geq 2a-1$ in contradiction to our assumption that $\Delta(H) \leq 2a-2$.

\textbf{Case 2:} Suppose $\an \geq 2$. We show this case cannot arise by obtaining a contradiction without the need for our assumption that the claim is false. Then $\deg_{H_j}(v_j) \geq \an \geq 2$ by the definition of $\Cn$. So, for each $x \in V_j$, we have $\deg_{H_j}(x) \geq 2$ by the properties of the degeneracy ordering and hence $\lceil\frac{1}{k-1}(\deg_{H_j}(x)-1)\rceil \geq 1$. But then we have $\sum_{x \in V_j}\lceil\frac{1}{k-1}(\deg_{H_j}(x)-1)\rceil \geq j$ which contradicts both (i) and (ii) of our hypotheses since $j \geq a+1$.
\end{proof}

For each odd $k \geq 5$ and each $a \geq 2$, the tightness of the condition $\sum_{x \in V(H)}\lceil\frac{1}{k-1}(\deg_H(x)-1)\rceil \leq a-2$ in Lemma~\ref{L:colouring3} is witnessed by the graph of order $a(k-1)$ that is the vertex disjoint union of a star with $(a-1)(k-1)+1$ edges and a perfect matching with $\frac{1}{2}(k-3)$ edges. In any proper colouring of such a graph, the colour assigned to the centre vertex of the star must be assigned to fewer than $k-1$ vertices. The proof of Theorem~\ref{T:evansForAnyGraph} differs from the proof of Theorems~\ref{T:evansBigGenk} and~\ref{T:evansForAdmissibleOrder} in that it appears that the order, and hence the degrees, of $G$ can belong to any congruence class modulo $k-1$. However we quickly see that the critical case is when the order of $G$ is congruent to 2 modulo $k-1$.

\begin{proof}[\textbf{\textup{Proof of Theorem~\ref{T:evansForAnyGraph}}}]
The second part of the theorem follows by Lemma~\ref{L:tightnessGenk}(c) and Lemma~\ref{L:k=3tightness}(a), so it remains to prove the first part. Let $G$ be a $K_k$-divisible graph of order $n$ such that either $|E(\Gc)| < n-\frac{1}{2}(k+1)$ or $k=3$ and $|E(\Gc)| < n$. Then, because $G$ cannot be $K_3$-divisible if $|E(\Gc)| = n-2$, in fact we have either
\begin{itemize}[nosep]
    \item
$|E(\Gc)| < n-\frac{1}{2}(k+1)$; or
    \item
$k=3$ and $|E(\Gc)| = n-1$.
\end{itemize}
We assume that $n$ is large relative to $k$ and consider three cases according to the congruence class of $n$ modulo $k-1$.

\textbf{Case 1:} Suppose that $k \geq 4$ and $n-1 \equiv j \mod{(k-1)}$ for some $j \in \{2, \ldots, k-2\}$. Then, because $G$ is $K_k$-divisible, $\deg_{\Gc}(x) \equiv j \mod{(k-1)}$ for each $x \in V(G)$.
Therefore, $|E(\Gc)| \geq \frac{1}{2}jn \geq n$, contradicting our assumption. So this case cannot arise.

\textbf{Case 2:} Suppose that $n-1 \equiv 0 \mod{(k-1)}$. Then, because $G$ is $K_k$-divisible, $\deg_{\Gc}(x) \equiv 0 \mod{(k-1)}$ for each $x \in V(G)$. Let $uv$ be an arbitrary edge of $G$. Let $T=N_{\overline{G}}(u) \cup N_{\overline{G}}(v)$ and note that $u,v \notin T$ and $|T| \leq \deg_{\overline{G}}(u)+\deg_{\overline{G}}(v)$. Also, $\deg_{\overline{G}}(x)$ is positive for each $x \in T$ and hence at least $k-1$. We have
\[|T|+(k-1)|T| \leq \medop\sum_{x \in \{u,v\}}\deg_{\Gc}(x)+\medop\sum_{x \in T} \deg_{\Gc}(x) \leq 2|E(\Gc)| < 2n.\]
So $|T| < \frac{2n}{k} \leq \frac{2n}{3}$ since $k \geq 3$. Therefore, $|N_G(u,v)| = n-2-|T| \geq \frac{n}{3} -O(1)$.
We also have $|E(G)| > \binom{n}{2}-n$.  In view of these two facts,  we can apply Lemma~\ref{L:mutualNeighbourhoodDecomp}, choosing $\gamma<\min\{1-\delta_{K_k}, \frac{1}{3k}\}$, to find a $K_k$-decomposition $\D$ of $G$ and hence complete the proof.

\textbf{Case 3:} Suppose that $n-1 \equiv 1 \mod{(k-1)}$. Then, because $G$ is $K_k$-divisible, $\deg_{\Gc}(x) \equiv 1 \mod{(k-1)}$ for each $x \in V(G)$. It will be convenient to define $\rho=0$ if $|E(\Gc)| < n-\frac{1}{2}(k+1)$ and $\rho=2$ if $k=3$ and $|E(\Gc)| = n-1$, so that we always have
$|E(\Gc)| < n-\frac{1}{2}(k+1)+\rho$.

Let $z$ be a vertex of minimum degree in $G$ and let $U=N_G(z)$. We will show that $\Gc[U]$ obeys the hypotheses of Lemma~\ref{L:colouring3}. Since $G$ is $K_k$-divisible there is an integer $a$ such that $|U|=\deg_{G}(z)=a(k-1)$. We may assume that $a \geq 1$ for otherwise $a=0$, $k=3$, $\Gc$ is a star with $n-1$ edges and hence $G$ is $K_3$-decomposable because its edges form a copy of $K_{n-1}$ and $G$ is $K_3$-divisible by assumption. Now $\deg_{\Gc}(z)=n-1-a(k-1)$, and each of the $n-1-a(k-1)$ vertices in $N_{\Gc}(z)$ has degree at least 1 in $\Gc$. Thus $\sum_{x \in V(G) \setminus U}\deg_{\Gc}(x) \geq 2n-2-2a(k-1)$, so
\begin{equation}\label{E:Th3degSuminU}
\medop\sum_{x \in U}\deg_{\Gc}(x) \leq 2|E(\Gc)|-(2n-2-2a(k-1)) < (2a-1)(k-1)+2\rho
\end{equation}
where the last inequality follows because $|E(\Gc)| < n-\frac{1}{2}(k+1)+\rho$. Thus,
\[\medop\sum_{x \in U}\lceil\tfrac{1}{k-1}(\deg_{\Gc[U]}(x)-1)\rceil  \leq \medop\sum_{x \in U}\lceil\tfrac{1}{k-1}(\deg_{\Gc}(x)-1)\rceil = \tfrac{1}{k-1}\medop\sum_{x \in U}\deg_{\Gc}(x)-a < a-1+\rho\]
where the equality holds because $|U|=a(k-1)$ and $\deg_{\Gc}(x) \equiv 1 \mod{k-1}$ for each $x \in U$, and the last inequality follows using \eqref{E:Th3degSuminU} and the fact that $\frac{2}{k-1}\rho=\rho$ in all cases. So we in fact have $\sum_{x \in U}\lceil\tfrac{1}{k-1}(\deg_{\Gc[U]}(x)-1)\rceil \leq a-2+\rho$ because the terms are all integers.
So if $\rho=0$, then $H$ obeys (i) in the hypotheses of Lemma~\ref{L:colouring3}. If $\rho=2$ and hence $k=3$ and $|E(\Gc)|=n-1$, then $\Delta(\Gc[U]) \leq 2a-2$ for otherwise $\Gc$ would have to be a graph obtained by adding exactly one edge to the vertex disjoint union of a star with $n-2a-1$ edges and a star with $2a-1$ edges. This contradicts the fact that each vertex of $\Gc$ has odd degree. So if $\rho=2$, then $H$ obeys (ii) in the hypotheses of Lemma~\ref{L:colouring3}. Thus, by Lemma~\ref{L:colouring3} there exists a proper colouring of $\Gc[U]$ with $a$ colours in which each colour class has order $k-1$. So there is a partition $\mathcal{U}$ of $U$ such that $|\mathcal{U}|=a$ and $G[X]$ is a copy of $K_{k-1}$ for each $X \in \mathcal{U}$. Let $\D=\{K_{X \cup \{z\}}:X \in \mathcal{U}\}$.

Let $G'$ be the graph obtained from $G$ by removing the edges of each copy of $K_k$ in $\D$ and then deleting the (now isolated) vertex $z$. It suffices to show that we can apply Lemma~\ref{L:mutualNeighbourhoodDecomp} to find a $K_k$-decomposition $\D'$ of $G'$, for then $\D \cup \D'$ will be a $K_k$-decomposition of $G$. Since $G$ is $K_k$-divisible, so is $G'$.

Let $uv$ be an arbitrary edge of $G'$, let $T=(N_{\Gc}(u) \cup N_{\Gc}(v)) \setminus \{z\}$, and note that $u,v \notin T$. Furthermore $\deg_{\Gc}(u)+\deg_{\Gc}(v) \geq |T|$. At most two edges of $\Gc$ are incident with two vertices in $\{u,v,z\}$ and hence
\[\deg_{\overline{G}}(u) + \deg_{\overline{G}}(v) + \deg_{\overline{G}}(z) \leq |E(\overline{G})| + 2 \leq n+1.\]
Thus, because $\deg_{\overline{G}}(u),\deg_{\overline{G}}(v)  \leq \deg_{\overline{G}}(z)$ by the definition of $z$, we have that $|T| \leq \deg_{\overline{G}}(u)+\deg_{\overline{G}}(v) \leq \frac{2}{3}n+O(1)$. So, considering the way in which $G'$ is obtained from $G$,  $N_{G'}(u,v) \geq n-3 - |T| - 2(k-1) > \frac{1}{3}n-O(1)$. We also have
\[|E(G')| = \mbinom{n}{2}-|E(\Gc)|-|\D|\mbinom{k}{2} > \mbinom{n}{2}-n-\tfrac{1}{2}k(n-1) =\mbinom{n}{2}-O(n)\]
because $|E(\Gc)| < n$ and $|\D| \leq \frac{n-1}{k-1}$. In view of these two facts,  we can apply Lemma~\ref{L:mutualNeighbourhoodDecomp}, choosing $\gamma< \min\{1-\delta_{K_k}, \frac{1}{3k}\}$, to find a $K_k$-decomposition $\D'$ of $G'$ and so complete the proof.
\end{proof}

The proof of Corollary~\ref{C:k=3} follows easily from Theorems~\ref{T:evansForAdmissibleOrder} and~\ref{T:evansForAnyGraph} and Lemma~\ref{L:k=3tightness}.

\begin{proof}[\textbf{\textup{Proof of Corollary~\ref{C:k=3}}}]
For $n \equiv 1,3 \mod{6}$ the result follows immediately from Theorem~\ref{T:evansForAdmissibleOrder} and for $n \equiv 0 \mod{6}$ the result follows immediately from Theorem~\ref{T:evansForAnyGraph}. For $n \equiv 5 \mod{6}$, Lemma~\ref{L:k=3tightness}(b) gives a $K_3$-divisible graph with $\binom{n}{2}-\frac{1}{2}(3n-7)$ edges that has no $K_3$-decomposition and, furthermore, any $K_3$-divisible graph of order $n$ with more than $\binom{n}{2}-\frac{1}{2}(3n-7)$ edges has at least $\binom{n}{2}-\frac{1}{2}(3n-13)$ edges and hence is $K_3$-decomposable by Theorem~\ref{T:evansForAdmissibleOrder} if $n$ is sufficiently large. For $n \equiv 2,4 \mod{6}$, Lemma~\ref{L:k=3tightness}(c) gives a $K_3$-divisible graph with $\binom{n}{2}-n-2$ edges that has no $K_3$-decomposition and, furthermore, any $K_3$-divisible graph of order $n$ with more than $\binom{n}{2}-n-2$ edges has at least $\binom{n}{2}-n+1$ edges and hence is $K_3$-decomposable by Theorem~\ref{T:evansForAnyGraph} if $n$ is sufficiently large.
\end{proof}

\section{Conclusion}

The work here leaves many avenues for further investigation. It would of course be desirable to establish results similar to ours for all $n$ rather than simply for sufficiently large $n$. However, for general $k$, even the existence problem for $(n,k,1)$-designs is only resolved for large $n$. Even for values of $k$ where the existence problem is completely solved, such an improvement of our results would not be achievable using the techniques we have employed here, due to their reliance on the decomposition results in \cite{GlockKuhnLoMontgomeryOsthus2019}. One could also ask for results similar to Theorem~\ref{T:evansBigGenk} for partial $(n,k,\lambda)$-designs for $\lambda \geq 2$. It may be that the techniques used here could be adapted to prove such results. As mentioned in Section~\ref{S:prelim}, Theorems~\ref{T:evansForAdmissibleOrder} and~\ref{T:evansForAnyGraph} are not necessarily tight for all $k$, and so there is the possibility of improving them for specific values of $k$. Further, one could attempt to prove results analogous to Corollary~\ref{C:k=3} for values of $k$ other than $3$. These last two possible goals may involve significant case analysis, however. Finally, Lemma~\ref{L:mutualNeighbourhoodDecomp} suggests the problem of investigating what conditions on the size and number of triangles per edge of a graph are sufficient to guarantee that it has a $K_k$-decomposition.

\bigskip
\noindent\textbf{Acknowledgments.}
The second author was supported by Australian Research Council grants DP150100506 and FT160100048.



\end{document}